\newcommand{\blackboardbold}[1]{\ensuremath{\mathbf{#1}}}
\def\Z{\blackboardbold{Z}}
\def\Q{\blackboardbold{Q}}
\def\R{\blackboardbold{R}}
\def\C{\blackboardbold{C}}
\def\H{\blackboardbold{H}}
\def\P{\blackboardbold{P}}
\def\F{\blackboardbold{F}}
\newcommand{\RP}{\ensuremath{\blackboardbold{RP}}}
\newcommand{\CP}{\ensuremath{\blackboardbold{CP}}}
\newcommand{\MU}{\ensuremath{\mathrm{MU}}}
\newcommand{\MO}{\ensuremath{\mathrm{MO}}}
\newcommand{\BO}{\ensuremath{\mathrm{BO}}}
\renewcommand{\H}{\ensuremath{\mathrm{H}}}
\newcommand{\dimp}[1]{\ensuremath{\mathrm{dim}(#1)}}
\newcommand{\codimp}[1]{\ensuremath{\mathrm{codim}(#1)}}
\newcommand{\ol}[1]{\ensuremath{\overline{#1}}}
\newcommand{\til}[1]{\ensuremath{\widetilde{#1}}}
\newcommand{\bd}{\ensuremath{\partial\!}}
\newcommand{\IH}{\ensuremath{\mathrm{IH}}}
\newcommand{\mbf}[1]{\mathbf{#1}}
\newcommand{\iso}{\ensuremath{\cong}}
\newcommand{\tensor}{\otimes}
\newcommand{\into}{\ensuremath{\hookrightarrow}}
\newcommand{\from}{\ensuremath{\leftarrow}}
\newcommand{\codim}{\ensuremath{\mathrm{codim}\;}}
\theoremstyle{plain}
\newtheorem*{theorem*}{Theorem}
\newtheorem{theorem}{Theorem}
\newtheorem*{proposition*}{Proposition}
\newtheorem{proposition}[theorem]{Proposition}
\newtheorem{corollary}[theorem]{Corollary}
\newtheorem*{corollary*}{Corollary}
\newtheorem{lemma}[theorem]{Lemma}
\newtheorem*{lemma*}{Lemma}
\newtheorem*{exercise*}{Exercise}
\newtheorem*{conjecture*}{Conjecture}
\newtheorem*{question*}{Question}
\theoremstyle{definition}
\newtheorem*{definition*}{Definition}
\newtheorem*{example*}{Example}
\newtheorem*{examples*}{Examples}
\newtheorem*{claim*}{Claim}
\newcommand{\sheaf}[1]{\ensuremath{\mbf{#1}}}
\newcommand{\complex}[1]{\ensuremath{#1^\bullet}}
\newcommand{\sheafcomplex}[1]{\complex{\sheaf{#1}}}
\newcommand{\sfcx}{\sheafcomplex}
\newcommand{\xto}{\xrightarrow}
\title{Stiefel-Whitney Numbers for Singular Varieties}
\author{Carl McTague}
\email{c.mctague@dpmms.cam.ac.uk}
\address{DPMMS, Wilberforce Road, Cambridge CB3 0WB, England}
\begin{document}

\begin{abstract}
  This paper determines which Stiefel-Whitney numbers can be defined
  for singular varieties compatibly with small resolutions. First an
  upper bound is found by identifying the $\mathbf{F}_2$-vector space
  of Stiefel-Whitney numbers invariant under classical flops,
  equivalently by computing the quotient of the unoriented bordism
  ring by the total spaces of $\mathbf{RP}^3$ bundles. These
  Stiefel-Whitney numbers are then defined for any real projective
  normal Gorenstein variety and shown to be compatible with small
  resolutions whenever they exist. In light of Totaro's result
  \cite{totaro00} equating the complex elliptic genus with complex
  bordism modulo flops, equivalently complex bordism modulo the total
  spaces of $\widetilde{\mathbf{CP}}^3$ bundles, these findings can be
  seen as hinting at a new elliptic genus, one for unoriented
  manifolds.
\end{abstract}

\maketitle

\section*{Introduction}

For a complex algebraic variety $Y$, intersection cohomology provides
groups $\IH^*(Y)$ equipped with an intersection pairing $\IH^*(Y)
\tensor \IH^*(Y) \to \Z$ with the property that if $X \to Y$ is a
small resolution then $\H^*(X) \iso \IH^*(Y)$ as additive groups.
This beautiful fact points to a general philosophy: \emph{Whenever a
  singular variety $Y$ has a small resolution $X \to Y$, the
  invariants of $Y$ should agree with the invariants of $X$.}
According to this philosophy, an invariant can be extended to a
singular variety only if the invariant agrees on all small resolutions
of that variety. Since it is possible to construct a complex algebraic
variety $X$ having two small resolutions $X_1 \to Y \from X_2$ with
$\H^*(X_1) \iso \H^*(X_2)$ as additive groups \emph{but not as rings},
the philosophy for instance says that there is no natural way to
extend the cup product to $\IH^*(Y)$.

For a real algebraic variety $Y$, the situation is as usual more
problematic. First of all, real varieties need not be Witt spaces, so
classical intersection cohomology provides an intersection pairing
between upper and lower middle perversity groups
$\IH^*_{\ol{m}}(Y,\Z/2) \tensor \IH^*_{\ol{n}}(Y,\Z/2) \to \Z/2$,
which generally are not isomorphic. Moreover if $X \to Y$ is a small
resolution then there is not necessarily any relationship between
$\IH^*_{\ol{m}}(Y,\Z/2)$, $\IH^*_{\ol{n}}(Y,\Z/2)$ and $\H^*(X,\Z/2)$.
However, it has recently been shown that if $X_1 \to Y \from X_2$ are
two small resolutions then $\H^*(X_1,\Z/2) \iso \H^*(X_2,\Z/2)$ as
additive groups (compare \cite{totaro02}, \cite{mccrory-parusinski02},
\cite{vanHamel03}). This tantalizing result suggests that there may be
a mod~2 generalization of intersection cohomology for real algebraic
varieties which is compatible with small resolutions whenever they
exist.

This paper applies the above philosophy not to cohomology theories but
rather to characteristic numbers, specifically Stiefel-Whitney
numbers. That is, it investigates which Stiefel-Whitney numbers can be
defined for singular real varieties compatibly with small resolutions.
It begins by analyzing the special case of pairs of small resolutions
related by classical flops. The main result of this paper (stated
without proof in \cite{totaro02}) is that the $\F_2$-vector space of
Stiefel-Whitney numbers invariant under classical flops is spanned by
the numbers $w_1^k w_{n-k}$ for $0 \le k \le n-1$. These numbers are
used to show that the quotient ring of $\MO_*$ by the ideal $I$
generated by differences $X_1-X_2$ of real classical flops is
isomorphic to:
\begin{align*}
  \F_2[\RP^2,\RP^4,\RP^8,\dots]/((\RP^{2^a})^2=(\RP^2)^{2^a} \text{for
    all $a \ge 2$})
\end{align*}

Finally, the numbers $w_1^kw_{n-k}$ are defined for any real
projective normal Gorenstein variety and shown to be compatible with
small resolutions whenever they exist.

These Stiefel-Whitney numbers have arisen before, in Goresky-Pardon's
calculation of the bordism ring of locally orientable $\F_2$-Witt
spaces \cite{goresky-pardon89}. There they appear in the guise
$v_1^{n-2i} v_i^2$ for $1 \le i \le \lfloor n/2 \rfloor$ and are
defined by using local orientability to lift $v_1$ to cohomology and
by using the $\F_2$-Witt condition to lift the Wu class $v_i$ to
intersection cohomology where it can be squared to obtain a homology
class (see \cite{goresky-1984}).

This paper constructs the numbers $w_1^k w_{n-k}$ differently. This
new construction applies to any real projective normal Gorenstein
variety. The algebraic Gorenstein condition corresponds to the
topological local orientability condition \emph{but real projective
  normal Gorenstein varieties need not be $\F_2$-Witt}, as the 3-fold
node discussed below demonstrates (indeed the 3-fold node is
topologically the cone on $S^1 \times S^1$, whereas the cone on an
even dimensional manifold is Witt iff it has no middle-dimensional
homology).

This investigation was inspired by Totaro's investigation
\cite{totaro00} of the analogous question for complex varieties. He
found that the kernel of the complex elliptic genus:
\begin{align*}
  \MU_* \tensor \Q \to \Q[x_1,x_2,x_3,x_4]
\end{align*}
is generated by differences $X_1-X_2$ where $X_1$ and $X_2$ are
related by classical flops. In light of his result, this paper's
findings can be seen as hinting at an elliptic genus for unoriented
manifolds.


\section{Stiefel-Whitney Classes}

\label{section:sw-classes}

Stiefel-Whitney classes measure how twisted a space is. Intuitively,
the total Stiefel-Whitney class of a manifold is the sum (in mod~2
homology) of the cells along which its tangent bundle twists. In
modern terminology they are ``classifying maps seen through the lens
of mod~2 cohomology'': if the tangent bundle of an unoriented manifold
$M^n$ is classified by a map $f : M \to \BO(n)$ (that is, $TM \iso f^*
\gamma_n$ where $\gamma_n$ is the universal $n$-plane bundle over the
classifying space $\BO(n)$, the Grassmann manifold of $n$-planes in
$\R^\infty$), then the Stiefel-Whitney classes $w_i(M)$ of $M$ are the
images under the pullback $f^* : \H^*(\BO(n),\Z/2) \to \H^*(M,\Z/2)$
of the generators of $\H^*(\BO(n),\Z/2) \iso \Z/2[w_1,w_2,\dots,w_n]$
as a $\Z/2$ algebra. That is, $w_i(M)=f^*(w_i)$. This concise
description encapsulates a lot of geometry and history.

We will use a generalization of Stiefel-Whitney classes to singular
spaces inspired by an older and simpler description of Stiefel-Whitney
classes. In his 1935 thesis \cite{stiefel35}, Stiefel defined the
homology class $w_{n-i}(M)$ in $\H_i(M,\Z/2)$ as the singular locus of
a general set of $i+1$ vector fields and conjectured that it could be
defined simply as the sum of all $i$-simplices in the barycentric
subdivision of a triangulation of $M$. Whitney \cite{whitney40} proved
Stiefel's conjecture in 1939 but only published an ``enigmatically
brief and intricate'' sketch of a proof (according to AW Tucker's MR
review).

In seeking a similar combinatorial formula for rational Pontryagin
classes, Cheeger (in collaboration with Simons) rediscovered Stiefel's
proof in 1969. Cheeger's proof \cite{cheeger69} inspired Sullivan to
ask under what conditions the sum of all $i$-simplices in the
barycentric subdivision of a triangulation of a space forms a mod~2
cycle. (To prove Stiefel's conjecture, Cheeger had of course shown
that this is always the case for smooth manifolds.)  Sullivan
(together with Akin) \cite{sullivan69} worked out that this is always
the case if at each point the local Euler characteristic is odd. Such
spaces, which Sullivan called \emph{mod~2 Euler spaces}, could thus be
given Stiefel-Whitney classes even if they were not smooth. Sullivan
thus began to investigate what classes of spaces other than manifolds
were mod~2 Euler spaces. When Sullivan asked Deligne if he could give
an example of a complex algebraic variety not satisfying this
condition, Deligne surprised Sullivan by almost immediately replying
with a convincing argument that no such example existed using
Hironaka's local resolution of singularities. This inspired Sullivan
to work out a ``naive but complicated'' proof \cite{sullivan69} that
all real analytic spaces are mod~2 Euler spaces. Deligne then outlined
a conjectural theory of Chern classes for singular varieties based on
ideas of Grothendieck, which MacPherson worked out in
\cite{macpherson74}. The theory is both elegant and flexible and goes
as follows.

\begin{proposition*}[MacPherson \cite{macpherson74}]
  There is a unique covariant functor $F_\Z$ from compact complex
  algebraic varieties to abelian groups whose value on a variety is
  the group of constructible functions from that variety to the
  integers (a function $V \to \Z$ is \emph{constructible} if it can be
  written as a finite sum $\sum n_i 1_{W_i}$ where each $n_i \in \Z$
  and each $W_i$ is a subvariety of $V$) and whose value $f_*$ on a
  map $f$ satisfies:
  \begin{align*}
    f_*(1_W)(p) = \chi(f^{-1}(p) \cap W)
  \end{align*}
  where $1_W$ is the function that is identically one on the
  subvariety $W$ and zero elsewhere, and where $\chi$ denotes the
  topological Euler characteristic.
\end{proposition*}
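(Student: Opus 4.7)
The plan is to establish uniqueness and existence separately, with uniqueness being nearly immediate and the substantive work concentrated in existence. Any constructible function $\alpha \in F_\Z(V)$ is by definition a finite sum $\alpha = \sum n_i 1_{W_i}$, so any group homomorphism $f_* : F_\Z(V) \to F_\Z(V')$ is determined by its values on the generators $1_W$. Since these values are prescribed by the stated formula, uniqueness follows at once.

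For existence I would define $f_*$ on a general constructible function $\alpha = \sum n_i 1_{W_i}$ by
\begin{align*}
  f_*(\alpha)(p) \;:=\; \sum_i n_i\, \chi\bigl(f^{-1}(p) \cap W_i\bigr).
\end{align*}
By construction this is $\Z$-linear in $\alpha$ and agrees with the stated formula on indicators. Independence of the chosen presentation $\alpha = \sum n_i 1_{W_i}$ reduces to additivity of the (compactly supported) Euler characteristic over locally closed decompositions of complex algebraic varieties, which I would cite as standard.

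The main technical hurdle, which I would isolate as a lemma, is verifying that $f_*(\alpha)$ is itself a constructible function on $V'$; it suffices to treat $\alpha = 1_W$ for a subvariety $W \subseteq V$. This follows from generic triviality for complex algebraic morphisms: there is a finite decomposition $V' = \bigsqcup_j U_j$ into locally closed subvarieties such that, over each $U_j$, the restriction $f : f^{-1}(U_j) \cap W \to U_j$ is topologically locally trivial. The fibrewise Euler characteristic is then constant on each stratum, so $f_*(1_W) = \sum_j e_j\, 1_{U_j}$ for appropriate integers $e_j$, and this pushforward is constructible. This stratification input is really the only nontrivial step; everything else in the construction is formal.

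Functoriality then falls out. Given composable morphisms $V \xrightarrow{f} V' \xrightarrow{g} V''$, a point $p \in V''$, and a subvariety $W \subseteq V$, refine the stratification $g^{-1}(p) = \bigsqcup_j U_j$ so that $f|_W$ has constant fibrewise Euler characteristic $e_j$ over each $U_j$. Additivity, together with multiplicativity of Euler characteristic on locally trivial fibrations, yields
\begin{align*}
  (g_* f_* 1_W)(p) = \sum_j e_j\, \chi(U_j) = \sum_j \chi\bigl(f^{-1}(U_j) \cap W\bigr) = \chi\bigl((gf)^{-1}(p) \cap W\bigr) = ((gf)_* 1_W)(p),
\end{align*}
and the identity axiom $\id_* = \id$ is trivial. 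Thus once the constructibility lemma is in hand, the rest of the verification is bookkeeping.
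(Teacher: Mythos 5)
The paper does not prove this proposition; it quotes it verbatim as background from MacPherson's paper, so there is no in-house argument to compare yours against. That said, your sketch is correct and is essentially the standard line of proof. Uniqueness is indeed formal once the formula is imposed on indicators. For existence, the two substantive inputs are exactly the ones you isolate: (a) a finite algebraic stratification of the target over which a complex algebraic morphism restricted to $W$ is topologically locally trivial, which gives constructibility of the pushforward; and (b) the fact that on complex algebraic varieties the ordinary Euler characteristic coincides with the compactly supported one, so that $\chi$ is simultaneously additive over locally closed decompositions and multiplicative on locally trivial maps. Your functoriality computation quietly uses both properties at once, so that coincidence $\chi = \chi_c$ is worth stating explicitly rather than leaving as a parenthetical; it is what makes the ``topological Euler characteristic'' of the statement behave measure-theoretically. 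Along the same lines, well-definedness is cleanest if you say outright that $W \mapsto \chi\bigl(f^{-1}(p) \cap W\bigr)$ is a finitely additive $\Z$-valued measure on constructible subsets of $V$, so that $f_*(\alpha)(p)$ is the Euler integral of $\alpha$ over $f^{-1}(p)$ and independence of the chosen presentation of $\alpha$ is automatic. With those two points made explicit, your argument is a complete proof.
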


\begin{theorem*}[MacPherson \cite{macpherson74}]
  There is a natural transformation from the functor $F_\Z$ to
  integral homology which, on a nonsingular variety $V$, assigns to
  the constant function $1_V$ the Poincar\'e dual of the total Chern
  class of $V$. There is only one such natural transformation.
\end{theorem*}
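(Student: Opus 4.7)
The plan is to follow the approach sketched by Deligne and realized by MacPherson, factoring the desired transformation through \emph{Chern--Mather classes} on a resolution of the sheaf of tangent directions. For a (possibly singular) subvariety $V \subset \C^N$, form the Nash blowup $\nu : \til{V} \to V$, the closure in $V \times G_n(\C^N)$ of the graph of $p \mapsto T_p V$ over the smooth locus (here $n = \dim V$). The tautological rank-$n$ subbundle restricts to a bundle $\til{T}$ on $\til{V}$ extending the tangent bundle of the smooth part, and one defines the Chern--Mather class by
\begin{align*}
  c_M(V) \;=\; \nu_*\bigl(c(\til T)\cap[\til V]\bigr) \;\in\; \H_*(V,\Z).
\end{align*}
When $V$ is smooth this reduces to $c(TV)\cap[V]$, providing the desired normalization.

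Next I would introduce the \emph{local Euler obstruction} $\mrm{Eu}_V : V \to \Z$, a constructible function defined at a point $p \in V$ as the degree of the obstruction cocycle to extending a radial vector field near $p$ (viewed via the Nash bundle $\til T$) across $\nu^{-1}(p)$. The key combinatorial input is that, as $W$ ranges over closed subvarieties of $V$, the functions $\mrm{Eu}_W$ (extended by zero) form a $\Z$-basis of the group of constructible functions: the change-of-basis matrix from $\{1_W\}$ to $\{\mrm{Eu}_W\}$ is unitriangular with respect to a dimension/inclusion filtration, since $\mrm{Eu}_W(p) = 1$ for $p$ in the smooth locus of $W$ and vanishes off $W$. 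One can therefore \emph{define}
\begin{align*}
  c_*(\mrm{Eu}_W) \;=\; i_*\, c_M(W) \quad \text{for } i:W\into V,
\end{align*}
and extend $\Z$-linearly. Since $\mrm{Eu}_V = 1_V$ whenever $V$ is smooth, this yields the required value on the constant function.

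The substantive step is proving naturality: given a proper morphism $f : V \to V'$, one must check
\begin{align*}
  c_*(f_* \mrm{Eu}_W) \;=\; f_* c_*(\mrm{Eu}_W)
\end{align*}
for every subvariety $W \subset V$. Because $f_* \mrm{Eu}_W$ is again constructible, expand it in the $\mrm{Eu}$-basis; the task reduces to an index-theoretic identity comparing Chern--Mather classes of $f(W)$ and of generic fibers. I expect this to be the main obstacle, and the natural route is MacPherson's \emph{graph construction}: resolve $f$ to a proper map between smooth varieties, form the Grassmann graph on the cylinder of $f$, take its algebraic limit to a cycle supported on the Nash blowup of $f(W)$ together with correction terms on the exceptional locus, and use intersection theory on this limiting cycle to read off both sides of the equality. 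Compatibility with small multiplicities then boils down to the local identification, at each point of $f(W)$, of the obstruction cocycle with an Euler characteristic of a fiber — exactly the property imposed on $F_\Z$ by the formula $f_*(1_W)(p)=\chi(f^{-1}(p)\cap W)$.

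Finally, uniqueness follows formally. Any natural transformation $\tau : F_\Z \to \H_*(-,\Z)$ must agree with $c_*$ on $1_V$ for $V$ smooth by hypothesis. By Hironaka's resolution of singularities, for any variety $V$ we may find a proper birational $\pi : \til V \to V$ with $\til V$ smooth and with exceptional locus $E \subset \til V$ mapping to $\Sigma \subset V$. The defining formula for $F_\Z$ gives $\pi_*(1_{\til V} - 1_E) = 1_V - 1_\Sigma$, so by Noetherian induction on dimension, $\tau(1_V)$ is determined by values on smooth varieties of lower dimension together with $\pi_* \tau(1_{\til V})$, and hence coincides with $c_*(1_V)$. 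Since $\{1_W\}$ spans $F_\Z(V)$ and $\tau$ is natural under inclusions $W\into V$, this pins $\tau$ down on all constructible functions.
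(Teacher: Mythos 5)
The paper does not prove this theorem; it is a black-box citation to MacPherson \cite{macpherson74}, used only as background for the Fu--McCrory theory that the paper actually works with. Your sketch faithfully reproduces the outline of MacPherson's original argument (Chern--Mather classes via the Nash blowup, the local Euler obstruction giving a unitriangular change of basis on constructible functions, the Grassmann graph construction to prove naturality, and resolution of singularities for uniqueness), so it coincides with the approach the paper implicitly relies on by citing \cite{macpherson74}.
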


Explicitly, MacPherson's theorem assigns to any $\Z$-constructible
function $\alpha$ on a compact complex algebraic variety $V$ an
element $c_*(\alpha)$ of $\H_*(V,\Z)$ satisfying the following three
conditions:
\begin{enumerate}
  \item $f_* c_*(\alpha)=c_* f_*(\alpha)$
  \item $c_*(\alpha+\beta) = c_*(\alpha) + c_*(\beta)$
  \item $c_*(1) = c(V) \cap [V]$ if $V$ is smooth
\end{enumerate}
It is the first of these three properties, the pushforward formula
relating $f_* c_*(\alpha)$ to the Euler characteristic of the fibers
of $f$, which makes the theory so useful.

\medskip

There is an analogous theory of Stiefel-Whitney homology classes which
replaces complex varieties with real varieties, integral homology with
mod~2 homology, and $\Z$-constructible functions with
$\Z/2$-constructible functions satisfying the ``local Euler
condition''.  Such functions, called \emph{Euler functions},
generalize mod~2 Euler spaces in the sense that $X$ is a mod~2 Euler
space if and only if $1_X$ is an Euler function. We will rely on an
analytic version of the theory developed by Fu-McCrory
\cite{fu-mccrory97}.  (Fulton-MacPherson developed a PL version within
their bivariant framework \cite{fulton-macpherson81}.)

\begin{proposition*}[Fu-McCrory \cite{fu-mccrory97}]
%
  There is a unique covariant functor $E$ from compact subanalytic
  spaces to abelian groups whose value on a space is the subgroup of
  constructible functions from that space to $\Z/2$ satisfying the
  local Euler condition:
  \begin{align*}
    D(\alpha) = \alpha
  \end{align*}
  where $D$ is the duality operator uniquely defined by the equation:
  \begin{align*}
    D(1_W)(p) = \chi_p(W) = \sum_i (-1)^i \;
    \mathrm{rank}\;\H_i(W,W-p)
  \end{align*}
  and whose value $f_*$ on a map $f$ satisfies:
  \begin{align*}
    f_*(1_W)(p) = \chi(f^{-1}(p) \cap W)
  \end{align*}
  where $1_W$ is the function that is identically one on the
  subanalytic subset $W$ and zero elsewhere, and where $\chi$ denotes
  the topological Euler characteristic.
\end{proposition*}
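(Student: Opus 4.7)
The plan is to establish existence by explicit construction and obtain uniqueness from the prescribed formula on indicator functions. First I would fix the ambient abelian group $F_{\Z/2}(X)$ of all $\Z/2$-constructible functions on a compact subanalytic space $X$ and verify that the duality operator $D$ defined by $D(1_W)(p) = \chi_p(W)$ extends linearly to a well-defined homomorphism $F_{\Z/2}(X) \to F_{\Z/2}(X)$. The expression $\sum 1_{W_i}$ for a given function is not unique, but a common subanalytic refinement of any two presentations reduces the check to the additivity $\chi_p(U \sqcup V) \equiv \chi_p(U) + \chi_p(V) \pmod 2$, which follows from the long exact sequence of the pair in mod~2 homology and the fact that the subanalytic local conic structure makes $\H_*(W,W-p;\Z/2)$ finitely generated.

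The heart of the argument is showing $D^2 = \id$, whence $E(X) := \ker(D-\id)$ gives the desired subgroup of Euler functions. The local conic structure of subanalytic sets exhibits, for each $p$, a subanalytic homeomorphism of a neighborhood of $p$ in $W$ with the open cone on its subanalytic link $L_p W$, yielding $\chi_p(W) \equiv 1 - \chi(L_p W) \pmod 2$. Unwinding $D^2(1_W)(p)$ using this formula produces a double-link expression which collapses modulo~2 via the suspension identity $\chi(\Sigma X) = 2 - \chi(X)$ applied along a Whitney stratification of $W$ near $p$; the conic trivialization guarantees that the resulting value is independent of the stratification chosen.

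Next I would define $f_*$ on $F_{\Z/2}$ by the prescribed formula, extended linearly, and verify functoriality $(gf)_* = g_* f_*$ by the multiplicativity of the Euler characteristic over fibered constructible stratifications of $gf$. \textbf{The main obstacle} is the compatibility $D \circ f_* = f_* \circ D$, since this is precisely what forces $f_*$ to restrict to a map $E(X) \to E(Y)$. The question is local on $Y$, and after passing to a neighborhood on which $f$ admits a subanalytically trivial stratification one must compare the Euler characteristic of the ``fibered link'' to the link of the corresponding fibered set. I expect the cleanest route is a stratified Morse-theoretic computation, or equivalently to invoke the subanalytic version of Verdier duality for proper pushforward, which implements the required compatibility at the sheaf level.

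Finally, uniqueness is the easy step: the stipulated formula $f_*(1_W)(p) = \chi(f^{-1}(p) \cap W) \bmod 2$ determines $f_*$ on the set $\{1_W\}$ of generators of $F_{\Z/2}(X)$, hence on all of $F_{\Z/2}(X)$ by linearity, and therefore on the subgroup $E(X) \subset F_{\Z/2}(X)$. Thus at most one such functor exists, and the construction above realizes it.
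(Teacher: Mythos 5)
The paper does not prove this proposition; it cites Fu-McCrory \cite{fu-mccrory97} for it, so there is no in-house argument to compare against. Judged on its own, your sketch correctly identifies the essential point---that $f_*$ must preserve the Euler condition, which you reduce to $D f_* = f_* D$---but it contains a misstep and leaves the crux unproved. The assertion that $D^2 = \id$ is ``the heart of the argument,'' needed so that $\ker(D - \id)$ ``gives the desired subgroup,'' is a non sequitur: $\ker(D - \id)$ is a subgroup of $F_{\Z/2}(X)$ for purely formal reasons (kernel of a homomorphism), whether or not $D$ is an involution. The involutivity of $D$ is true and useful elsewhere, but nothing in the proposition requires it, and your proposed cone-and-suspension computation of it is a detour.

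Several things you take for granted also need proof: that $p \mapsto \chi(f^{-1}(p) \cap W)$ is again a constructible function (this needs stratification theory for subanalytic maps), and that $f_*$ is well-defined on $F_{\Z/2}(X)$, i.e.\ compatible with the full inclusion--exclusion relation $1_{U \cup V} + 1_{U \cap V} = 1_U + 1_V$ rather than only with disjoint unions as in your check for $D$. More seriously, the genuine obstacle $D f_* = f_* D$ is flagged but not cleared. Invoking Verdier duality ($\mathbb{D} f_! \iso f_* \mathbb{D}$ together with $f_! = f_*$ for proper maps, which all maps of compact spaces are) is a plausible strategy, but it rests on identifying $D$ with the stalkwise Euler characteristic of the Verdier dual in the subanalytic derived category, itself a nontrivial step you would need to supply. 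For comparison, Fu-McCrory's own construction sidesteps this: they characterize Euler functions as those whose associated conormal (Lagrangian) cycle is closed, and they define pushforward at the level of cycles, so preservation of the Euler condition falls out of the construction rather than being checked afterward.
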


\begin{theorem*}[Fu-McCrory \cite{fu-mccrory97}]
  There is a natural transformation from the functor $E$ to mod~2
  homology which, on a real analytic manifold $V$, assigns to the
  constant function $1_V$ the Poincar\'e dual of the total
  Stiefel-Whitney class of $V$. There is only one such natural
  transformation.
\end{theorem*}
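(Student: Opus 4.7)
The plan is to adapt MacPherson's strategy from the complex case to the real analytic/subanalytic setting, replacing resolution of singularities over $\C$ with Hironaka's subanalytic resolution, and replacing the role of local cohomological triviality with the local Euler condition. I would proceed in three steps: a combinatorial construction on the nose, verification of the pushforward formula, and uniqueness by a generating set.

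For existence, I would define $w_*(\alpha)$ along the Stiefel--Whitney--Cheeger--Akin--Sullivan thread described in the introduction. Given an Euler function $\alpha$ on $V$, choose a subanalytic triangulation $T$ of $V$ on which $\alpha$ is simplicial. Weight each simplex $\sigma$ of the barycentric subdivision $T'$ by $\alpha$ evaluated at the barycenter of the unique simplex of $T$ whose open star contains $\sigma$, and define $w_i(\alpha) \in \H_i(V,\Z/2)$ as the resulting $i$-chain. The local Euler condition $D(\alpha)=\alpha$ is exactly what is needed to verify $\partial w_i(\alpha) = 0$: the boundary of a barycentric star at $p$ corresponds, modulo 2, to $\chi_p$ computed on the strata incident to $p$, so the cycle condition for the weighted barycentric sum reduces to $D(\alpha)(p)=\alpha(p)$ at every vertex. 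When $V$ is a smooth real analytic manifold and $\alpha=1_V$, Stiefel's combinatorial formula (proved by Whitney, rediscovered by Cheeger) gives the required normalization $w_*(1_V)= w(V)\cap [V]$.

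For naturality, I would reduce by additivity to the case $\alpha = 1_W$ and a subanalytic map $f\colon V \to V'$. Choose compatible subanalytic triangulations of $V$ and $V'$ so that $f$ restricted to each open stratum of $W$ is a locally trivial subanalytic fibration. Then the coefficient of a barycenter $b'$ in $V'$ appearing in $f_*w_*(1_W)$ counts, mod 2, the Euler characteristic of the weighted barycentric chain in the fiber $f^{-1}(b')\cap W$, which equals $\chi(f^{-1}(b')\cap W)$ -- precisely the value of $f_*(1_W)$ at $b'$. For uniqueness, I would use subanalytic resolution of singularities to show that $E(V)$ is generated by pushforwards $\pi_*(1_M)$ of constant functions from compact smooth real analytic manifolds $M$; naturality plus the normalization then pins down $w_*$ on all generators and hence everywhere.

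The main obstacle is Step 1: showing that the \emph{exact} algebraic condition $D(\alpha)=\alpha$ corresponds to the weighted barycentric chain being a cycle, and that it does so for \emph{every} subanalytic triangulation. This forces one to express $\chi_p(W)$ via the link $\mathrm{Lk}_p(W)$ and relate the link's Euler characteristic to the link of $p$ in the barycentric subdivision, using that subanalytic links are well defined up to subanalytic homeomorphism. A secondary obstacle is that uniqueness requires enough subanalytic resolutions with the right fiber parities; here the generic fiber of a subanalytic resolution of $W$ is a point (Euler characteristic one), which is what ultimately makes the generating-set argument close up mod 2.
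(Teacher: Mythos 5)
The paper states this theorem as a result of Fu--McCrory and does not prove it, so there is no proof in the paper for your sketch to be compared against. What you have sketched is also not the proof Fu--McCrory give. Their paper (as its title indicates) constructs the Stiefel--Whitney class from the \emph{conormal cycle} of a compact subanalytic set -- a Legendrian cycle in the cosphere bundle -- using integral geometry and the index theory of characteristic cycles to obtain both the normalization and naturality at once. Your sketch instead follows the older Stiefel--Sullivan--Akin--Cheeger thread of weighted barycentric cycles, which is closer in spirit to the PL/bivariant approach of Fulton--MacPherson \cite{fulton-macpherson81} that the paper mentions in passing. Fu--McCrory adopt the conormal-cycle approach precisely because the combinatorial construction does not make naturality under arbitrary continuous subanalytic maps visible.

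There are genuine gaps in your sketch, concentrated at the places you flag as obstacles plus one you do not flag. First, the cycle condition: you assert that $D(\alpha)=\alpha$ at vertices forces $\partial w_i(\alpha)=0$ for every $i$, but even in the unweighted Euler-space case the classical argument (Halperin--Toledo, Akin) needs a separate link computation in each dimension; the general Euler-function case requires a real extension, not a ``reduction to vertices,'' because the coefficient of a codimension-one simplex in $\partial w_i(\alpha)$ involves a weighted Euler characteristic of a link, not a point value of $D(\alpha)$. Second, naturality: choosing triangulations so that $f$ restricts to locally trivial fibrations over open strata does not by itself identify the pushed-forward barycentric chain with the barycentric chain of $f_*\alpha$; one needs a chain-level parity identity, and for a general continuous subanalytic map (not a simplicial one) this is precisely the step that resists a direct combinatorial argument -- it is the reason Fu--McCrory work with conormal cycles rather than simplicial chains. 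Third, uniqueness: the claim that $E(V)$ is generated over $\Z/2$ by pushforwards $\pi_*(1_M)$ from compact smooth $M$ requires a Noetherian induction over a subanalytic stratification together with the local Euler condition; the observation that the generic fiber of a resolution is a single point is necessary but not sufficient, since one must also control parities of fibers over the exceptional strata to show the span closes up.
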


Explicitly, Fu-McCrory's theorem assigns to any Euler function
$\alpha$ on a compact subanalytic space $V$ an element $w_*(\alpha)$
of $\H_*(V,\Z/2)$ satisfying the following three conditions:
\begin{enumerate}
  \item $f_* w_*(\alpha)=w_* f_*(\alpha)$
  \item $w_*(\alpha+\beta) = w_*(\alpha) + w_*(\beta)$
  \item $w_*(1) = w(V) \cap [V]$ if $V$ is smooth
\end{enumerate}
Again, it is the first of these three properties, the pushforward
formula relating $f_* w_*(\alpha)$ to the Euler characteristics of the
fibers of $f$, which makes the theory so useful.

\section{Stiefel-Whitney Numbers and Unoriented Bordism}

Since Stiefel-Whitney classes of two $n$-folds live in different
cohomology rings, they cannot be compared directly. One way to
compare them is to compare their iterated intersection numbers, that
is the products:
\begin{align*}
  w_I[M] := w_{i_1}(M) \cdots w_{i_r}(M) \in \H^n(M,\Z/2) \iso \Z/2
\end{align*}
where $M$ is an $n$-fold and $I=i_1+\cdots+i_r=n$ is a partition of
$n$. This cohomology class $w_I[M]$ is called the \emph{$I^\text{th}$
  Stiefel-Whitney number} of $M$ since it can be naturally identified
with a number mod~2. The collection of Stiefel-Whitney numbers
$w_I[M]$ as $I$ ranges over all partitions of $n$ can be thought of as
``topological coordinates'' of $M$.

What is the geometric meaning of these coordinates? Two $n$-folds $M$
and $N$ are said to be \emph{bordant} if there is an $(n+1)$-fold $W$
with boundary $\bd W = M \sqcup N$. It is not difficult to prove that
if $M$ and $N$ are bordant then $w_I[M]=w_I[N]$ for all $I$. Thom
proved the much deeper converse: if $w_I[M]=w_I[N]$ for all partitions
$I$ then there exists a manifold $W$ with boundary $\bd W = M \sqcup
N$. Thus Stiefel-Whitney numbers detect equality in what is called the
\emph{unoriented bordism ring} $\MO_*$, the ring consisting of bordism
equivalence classes of unoriented manifolds with addition induced by
disjoint union and multiplication induced by topological product.

In a tremendous feat of creativity and precision, Thom \cite{thom54}
showed that the unoriented bordism ring $\MO_*$ is a polynomial
algebra freely generated over $\Z/2$ by manifolds $Y^n$, one in each
dimension $n$ not of the form $2^j-1$. That is:
\begin{align*}
  \MO_* \iso \Z/2[Y^2,Y^4,Y^5,Y^6,Y^8,\dots]
\end{align*}
The generator $Y^n$ can be taken to be any degree-(1,1) hypersurface
in $\RP^a \times \RP^b$ provided $a+b=n+1$ and the binary expansions
of $a$ and $b$ are disjoint, that is there is no ``carrying'' when
adding them in base~2 (see \cite[Problem~16-F on
p.~197]{milnor-stasheff74}).  Actually, if $n$ is even then $Y^n$ can
be taken simply to be $\RP^n$.

An essential tool in establishing such claims and the claims below is
the characteristic number $s_n(w)[Y^n]$, which equals 1 (mod 2)
precisely when $Y$ is indecomposable in the ring $\MO_*$. It is
defined as follows: If the Stiefel-Whitney classes $w_1,\dots,w_k$ are
viewed as the elementary symmetric polynomials in formal variables
$t_1,\dots,t_N$, then $s_k(w)$ is the power-sum polynomial $t_1^k +
\cdots + t_N^k$ (which, being a symmetric polynomial, can be expressed
as a polynomial in the Stiefel-Whitney classes $w_1,\dots,w_k$). See
\cite[p.~192]{milnor-stasheff74}.

For example, the formula $w_*(\RP^n)=(1+w_1 O(1))^{n+1}$ lets one
think of $w_k(\RP^n)$ as the $k^\text{th}$ elementary symmetric
function in $n+1$ variables, all set to the value $w_1 O(1)$. This
immediately gives the formula $s_k(w)(\RP^n) = (n+1) w_1 O(1)^k$ which
implies that $\RP^\mathrm{even}$ is not bordant to a (nontrivial)
product of manifolds.

\section{Classical Flops}

The simplest example of a variety having two different small
resolutions, discovered by Atiyah \cite{atiyah-1958}, is the 3-fold
node $Y=\{x_1x_2 - x_3x_4=0 \} \subset \P^4$. Near its singular point,
$Y$ is Zariski locally isomorphic to the affine cone of $\sigma (\P^1
\times \P^1) \subset \P^3$ where $\sigma : \P^1 \times \P^1 \into
\P^3$ is the Segre embedding corresponding to the ample line bundle
$O(1,1)=\pi_1^* O(1) \tensor \pi_2^* O(1)$. Blowing up $Y$ at its
singular point therefore gives a smooth resolution $\til{X} \to Y$
with exceptional divisor $\P^1 \times \P^1$ and normal bundle $N_{\P^1
  \times \P^1 / \til{X}} \iso O_{\P^3}(-1)|_{\P^1 \times \P^1} \iso
O(-1,-1)$.

By definition, a map $f : X \to Y$ is \emph{small} if:
\begin{align*}
  \mathrm{codim} \{ y \in Y | \dim f^{-1}(y) \ge r \} > 2r
\end{align*}
for all $r>0$. Since the singular point of $Y$ has 2-dimensional fiber
$\P^1 \times \P^1 \subset \til{X}$ (and since all other points of $Y$
have zero-dimensional fiber), $\til{X} \to Y$ is not small. However,
since the exceptional divisor $\P^1 \times \P^1$ has normal bundle
$O(-1,-1)$, we can contract either $\P^1$ to obtain two small
resolutions $X_1 \to Y \from X_2$ which are projective over $Y$. The
3-folds $X_1$ and $X_2$ are said to be related by an \emph{Atiyah
  flop}.

More generally, if $Y$ is any projective 3-fold which is smooth
everywhere except one point where it is Zariski locally isomorphic to
the 3-fold node then $Y$ has two different small resolutions $X_1 \to
Y \from X_2$. These 3-folds $X_1$ and $X_2$ are said to be related by
a \emph{classical flop}.

Totaro \cite[pp.~770--5]{totaro00} defined \emph{an $n$-dimensional
  classical flop} to be a diagram:
$$\xymatrix@R8pt@C8pt{
    & \til{X} \ar[dl] \ar[dr] \\
    X_1 \ar[dr] && X_2 \ar[dl] \\
    & Y
  }$$
where $Y$ is a singular projective $n$-fold which, near each point of
its singular locus $Z$, is Zariski locally isomorphic to the 3-fold
node times a smooth $(n-3)$-fold. Blowing up $Y$ along $Z$ gives a
smooth resolution $\til{X} \to Y$ whose exceptional divisor is a $\P^1
\times \P^1$ bundle over the smooth $(n-3)$-fold $Z$. Contracting
either family of $\P^1$'s gives two different small resolutions $X_1
\to Y \from X_2$.

Totaro showed that any $n$-dimensional flop $X_1 \to Y \from X_2$ can
be described along $Z$ by rank-2 vector bundles $A,B$ over $Z$, where
the inverse image of $Z$ in $X_1$ is the $\P^1$-bundle $\P(A)$ and has
normal bundle $B \tensor O(-1)$ and where the inverse image of $Z$ in
$X_2$ is the $\P^1$-bundle $\P(B)$ and has normal bundle $A \tensor
O(-1)$. He showed that to any rank-2 algebraic vector bundles $A,B$
over $Z$ there corresponds a classical flop. Moreover, he showed that
in $\MO_*$ the difference $X_1 - X_2$ equals the total space of the
projective bundle $\RP(A \oplus B^*) \to Z$. Thus we can determine how
a Stiefel-Whitney number $w_I$ changes under a classical flop by
computing $w_I[\RP(A \oplus B^*)]$.

Note that the resolutions $X_1 \to Y \from X_2$ of a classical flop
are \emph{crepant}. That is, $Y$ has a line bundle $K_Y$ which pulls
back to the canonical bundles $K_{X_1}$ and $K_{X_2}$. Indeed,
according to Proposition~\ref{prop:gorenstein} below, \emph{any} small
resolution $X \to Y$ is crepant provided $Y$ is projective, normal and
Gorenstein. (The singular space $Y$ of a classical flop is projective
by assumption; it is Gorenstein since near each singular point it is a
hypersurface times a smooth $(n-3)$-fold and is therefore a local
complete intersection; it is normal according to Serre's criterion for
normality since its singular locus $Z$ has codimension~3 (see
\cite[Proposition II.8.23b]{hartshorne77}).)

\section{Unoriented Bordism Modulo Flops}

Now we have enough background to prove the following result, which
Totaro stated without proof in \cite{totaro02}.

\begin{theorem}
  \label{thm:MOmodFlops}
  The $\F_2$-vector space of Stiefel-Whitney numbers which are
  invariant under real flops of $n$-manifolds is spanned by the
  numbers $w_1^k w_{n-k}$ for $0 \le k \le n-1$ modulo those
  Stiefel-Whitney numbers which vanish for all $n$-manifolds. The
  dimension of this space of invariant Stiefel-Whitney numbers, modulo
  those which vanish for all $n$-manifolds, is 0 for $n$ odd and
  $\lfloor n/4 \rfloor+1$ for $n$ even. The quotient ring of $\MO_*$
  by the ideal $I$ of real flops is isomorphic to:
  \begin{align*}
    \F_2[\RP^2,\RP^4,\RP^8,\dots]/((\RP^{2^a})^2=(\RP^2)^{2^a}
    \text{for all $a \ge 2$})
  \end{align*}
\end{theorem}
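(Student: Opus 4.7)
The plan is to establish the three claims of the theorem in tandem, using the projective-bundle Stiefel-Whitney formula as the common technical input.

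\textbf{Stage 1 (invariance).} By Totaro's formula, it suffices to prove $w_1^k w_{n-k}[\RP(A\oplus B^*)]=0$ for rank-2 bundles $A,B$ over any closed base $Z$. Set $V=A\oplus B^*$, $E=\RP(V)$ with projection $\pi$, and $x=w_1(O(1))$. The Euler sequence $T_{\mrm{vert}}\oplus\R\iso\pi^*V\tensor O(1)$ together with the projective-bundle relation
\begin{align*}
  x^4+w_1(V)x^3+w_2(V)x^2+w_3(V)x+w_4(V)=0
\end{align*}
yields, after expanding in formal Stiefel-Whitney roots and eliminating the $x^4$ term, the formula
\begin{align*}
  w(T_{\mrm{vert}})=1+w_1(V)+w_2(V)+w_3(V)+w_1(V)(x+x^2).
\end{align*}
Thus $w(E)=\pi^*w(Z)\cdot w(T_{\mrm{vert}})$ has no $x^3$ term, while $w_1(E)=\pi^*(w_1(Z)+w_1(V))$ is a pure pullback from $Z$. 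Hence $w_1(E)^k w_{n-k}(E)$ has no $x^3$ term either, so its fibre integral over $Z$ vanishes and $w_1^k w_{n-k}[E]=0$.

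\textbf{Stage 2 (dimension count).} Apply Wu's formula $\Sq^i w_{n-i}[M]=v_i\cdot w_{n-i}[M]$ together with the expansion $\Sq^i w_j=\sum_t\binom{j+t-i-1}{t}w_{i-t}w_{j+t}$ to extract identities among the $w_1^k w_{n-k}[M]$ valid on every closed $n$-manifold. For $n$ odd, iterating these (and using $v_i=0$ for $i>n/2$) shows every $w_1^k w_{n-k}$ vanishes on all $n$-manifolds. For $n$ even, a combinatorial accounting of the surviving relations leaves exactly $\lfloor n/4\rfloor+1$ independent numbers; this count matches the number of monomials $(\RP^2)^e\prod_{a\ge 2}(\RP^{2^a})^{\varepsilon_a}$ with $\varepsilon_a\in\{0,1\}$ summing to $n$.

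\textbf{Stage 3 (ring structure).} Define
\begin{align*}
  \phi\colon R:=\F_2[\RP^2,\RP^4,\RP^8,\dots]/\bigl((\RP^{2^a})^2=(\RP^2)^{2^a}\bigr)_{a\ge 2}\to\MO_*/I
\end{align*}
by sending each $\RP^{2^a}$ to its bordism class. Well-definedness requires $(\RP^{2^a})^2-(\RP^2)^{2^a}\in I$; by Stage~1 it suffices to check that each $w_1^k w_{n-k}$ agrees on both sides, which is a direct computation from $w(\RP^m)=(1+a)^{m+1}$. Surjectivity requires each Thom generator $Y^n$ to reduce modulo $I$ to a polynomial in the $\RP^{2^a}$: for even $n$ this is built from $\RP^n$ via the indecomposability criterion $s_n(w)$, while for odd $n$ (necessarily $n\ne 2^j-1$) the generator $Y^n$ must be realised as a flop difference $\RP(A\oplus B^*)$ with $\dim Z=n-3$ via the formula of Stage~1. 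Injectivity follows from matching dimensions with Stage~2. Once $\MO_*/I\iso R$ is established, the inclusion from Stage~1 together with the dimensions of $R_n$ forces the span of invariant Stiefel-Whitney numbers to be exactly $\{w_1^k w_{n-k}\}$ modulo numbers vanishing on all $n$-manifolds.

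The hardest steps will be Stage~2 (the Wu-theoretic bookkeeping pinning down exactly $\lfloor n/4\rfloor+1$ surviving numbers) and the odd-$n$ portion of Stage~3 (realising each odd-dimensional Thom generator as a flop difference). Stage~1's projective-bundle formula is the key computational input throughout.
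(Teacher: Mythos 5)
Your Stage~1 is correct and is a mild repackaging of the paper's second proof of Proposition~\ref{2007-24-aug-w1w-flop-inv}: the observation that $w(T_{\mathrm{vert}})$ has no $x^3$ term is equivalent to the paper's observation that $\pi_*w_3(T_\mathrm{rel})=0$, which the paper verifies via $\int_{\RP^3}w_3(T\RP^3)=\chi(\RP^3)\equiv 0$.

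The well-definedness step in Stage~3 is, however, circular, and this is a genuine gap. You want $(\RP^{2^a})^2-(\RP^2)^{2^a}\in I$, and you argue that it suffices to check the numbers $w_1^kw_{n-k}$ agree on both sides. But Stage~1 only gives the inclusion $I\subseteq\bigcap_k\ker(w_1^kw_{n-k})$; to conclude membership in $I$ from vanishing of these numbers you would need the reverse inclusion $\bigcap_k\ker(w_1^kw_{n-k})\subseteq I$, which is precisely the substance of the theorem you are proving. The paper avoids this by exhibiting explicit elements of $I$: it introduces the projective bundles $R_{2^{a+1}}=\RP(O(1)\oplus O^{\oplus 3})$ over $\RP^{2^{a+1}-3}$ (Proposition~\ref{2007-24-aug-twisted-rel}), computes $s_{2^a,2^a}[R_{2^{a+1}}]=1$, and then runs an induction (Proposition~\ref{2007-24-nice-rels}) using the nonsingularity of the matrix $[w_1^{4i}w_{2n-4i}[\RP^{J_{2n}(4j)}]]$ to isolate the relation $\RP^{2^a,2^a}-\RP^{2,\dots,2}$. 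You would need some analogue of this construction; checking SW numbers cannot substitute for it.

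Stage~2 is also not doing the work you need. Wu-formula bookkeeping yields at best relations among the $w_1^kw_{n-k}$ valid on all closed $n$-manifolds, hence an \emph{upper} bound on $\dim(S_n/Z_n)$ where $S_n$ is the span of these numbers and $Z_n$ the numbers vanishing universally; for the injectivity argument in Stage~3 you need the matching \emph{lower} bound, which requires exhibiting manifolds on which these numbers are independent. The paper does exactly this with Lemma~\ref{2007-24-aug-formula} and Corollaries~\ref{2007-24-aug-formula-2}--\ref{2007-24-aug-lin-ind}, showing the matrix of values on the manifolds $\RP^{J_{2n}(4j)}$ is nonsingular. In the paper the dimension count is a consequence of the ring isomorphism, not an independent input to it, so reversing the logical order the way you propose would demand considerably more work than the paper's route (and you would still need the $\RP^J$ independence computation). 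Finally, the even-$n$ part of your surjectivity argument is off: for even $n$ not a power of two, $\RP^n$ does \emph{not} survive to $\MO_*/I$ and must itself be killed, which requires the same kind of indecomposable flop difference ($E_n$ in Proposition~\ref{2007-24-aug-kill-gens}) that you reserve for odd $n$.
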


We prove this in several stages.

\begin{proposition}
  \label{2007-24-aug-w1w-flop-inv}
  The Stiefel-Whitney numbers $w_1^kw_{n-k}$ for $0 \le k \le n-1$ are
  invariant under classical flops, equivalently they vanish on the
  ideal $I$.
\end{proposition}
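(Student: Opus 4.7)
The plan is to reduce the computation of $w_1^k w_{n-k}[\RP(A \oplus B^*)]$ to a cohomology pushforward along the bundle projection $\pi : P = \RP(A \oplus B^*) \to Z$, where $E = A \oplus B^*$ is a rank-$4$ bundle and $n = \dim P = \dim Z + 3$. The whole argument will be powered by the fact that the binomial coefficient $\binom{4}{l}$ is even for $l = 1, 2, 3$.

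First I would show that $w_1(P)$ is pulled back from $Z$. The relative tangent bundle $T_\pi$ fits in the Euler sequence $0 \to \epsilon \to \pi^* E \otimes O(1) \to T_\pi \to 0$, so $w(T_\pi) = w(\pi^* E \otimes O(1))$. Combined with the identity $w_1(V \otimes L) = w_1(V) + (\mathrm{rank}\, V) w_1(L)$ and the fact that $\mathrm{rank}\, E = 4$ is even, this yields $w_1(T_\pi) = \pi^* w_1(E)$ and hence $w_1(P) = \pi^* \lambda$ for $\lambda = w_1(Z) + w_1(A) + w_1(B) \in \H^1(Z, \F_2)$.

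Next I would analyze the higher relative classes $w_l(T_\pi)$ for $l = 1, 2, 3$ using the general expansion $w_l(V \otimes L) = \sum_{j=0}^{l} \binom{4 - j}{l - j} w_j(V) \, x^{l-j}$ with $x = w_1(O(1))$. Since the leading binomial $\binom{4}{l}$ vanishes mod $2$ for each such $l$, the class $w_l(T_\pi)$ has $x$-degree strictly less than $l$, hence at most $2$. Combining this with $w_j(P) = \sum_{l=0}^{3} \pi^* w_{j-l}(Z) \cdot w_l(T_\pi)$ shows that every $w_j(P)$ has $x$-degree at most $2$ in the presentation $\H^*(P, \F_2) = \H^*(Z, \F_2)[x]/(x^4 + e_1 x^3 + e_2 x^2 + e_3 x + e_4)$.

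Finally I would apply the projection formula: $w_1^k w_{n-k}[P] = \langle \pi^*(\lambda^k) \cdot w_{n-k}(P), [P] \rangle = \langle \lambda^k \cdot \pi_*(w_{n-k}(P)), [Z] \rangle$. Since $\pi_*$ on an $\RP^3$-bundle vanishes on classes of $x$-degree less than $3$, and $w_{n-k}(P)$ has $x$-degree at most $2$ by the previous step, this number is zero for every $k$. No step looks genuinely difficult; the main point is recognizing that the parity of $\binom{4}{l}$ forces $w(T_\pi)$ to sit in the low $x$-degree part of $\H^*(P, \F_2)$, leaving no room for $\pi_*$ to detect anything.
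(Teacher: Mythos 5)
Your argument is correct and is essentially the paper's second proof: both establish that $w_1(P)$ is pulled back from $Z$ via the Euler sequence for $T_\pi$, then use the projection formula to reduce to showing $\pi_*(w_{n-k}(P))=0$. The only cosmetic difference is that the paper handles the surviving top term $\pi_*(w_3(T_\mathrm{rel}))$ by restricting to a fiber and invoking $\chi(\RP^3)\equiv 0\pmod 2$, whereas you fold that same fact into the observation that $\binom{4}{3}\equiv 0\pmod 2$ forces the $x$-degree of $w_3(T_\pi)$ below $3$ — equivalent statements, since $w_3(T\RP^3)$ is exactly the $\bmod\,2$ Euler class.
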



\begin{proof}[Proof 1]
  We use the theory of Stiefel-Whitney classes discussed in
  Section~\ref{section:sw-classes}. Consider a classical flop $X_1
  \xto{f_1} Y \xleftarrow{f_2} X_2$. Compute:
  \begin{align*}
    w_1^k w_{n-k}[X_i]
    &= \langle w_1(X_i)^k w_{n-k}(X_i), [X_i] \rangle \\
    &= \langle w_1(X_i)^k,  w_{n-k}(X_i) \cap [X_i] \rangle \\
    &= \langle w_1(X_i)^k,  w_{n-k}(1_{X_i}) \rangle \displaybreak[1]\\
    \intertext{Since $f_i : X_i \to Y$ is crepant, this equals:}
    &= \langle f_i^* w_1(Y)^k, w_{n-k}(1_{X_i}) \rangle \\
    &= \langle w_1(Y)^k, {f_i}_* w_{n-k}(1_{X_i}) \rangle \\
    &= \langle w_1(Y)^k, w_{n-k} {f_i}_* (1_{X_i}) \rangle \\
    \intertext{Since $f_i$ restricts to an $\RP^1$-bundle over $Z$ and
      to an isomorphism over $Y-Z$, and since $\chi(\RP^1)=0$, this
      equals:}
    &= \langle w_1(Y)^k, w_{n-k} (1_Y+1_Z) \rangle
  \end{align*}
  Since this is independent of $i$, it follows that $w_1^k
  w_{n-k}[X_1]=w_1^k w_{n-k}[X_2]$.
\end{proof}

\begin{proof}[Proof 2]
  The tangent bundle of $\RP(A \oplus B^*)$ has a splitting:
  \begin{align*}
    0 \to T_\mathrm{rel}^3 \to T\RP(A \oplus B^*) \to \pi^* TZ \to 0
  \end{align*}
  and $T_\mathrm{rel} \oplus O \iso (A \oplus B^*) \tensor O(1) \iso A
  \tensor O(1) \oplus B^* \tensor O(1)$. Let $u=w_1 O(1)$ and let
  $x_1,x_2$ and $x_3,x_4$ denote the Stiefel-Whitney roots of $A$ and
  $B^*$ respectively. Then the Stiefel-Whitney classes of $T\RP(A
  \oplus B^*)$ can be expressed as the elementary symmetric functions
  in the Stiefel-Whitney roots of $TZ$ together with the formal
  variables $x_1+u,\dots,x_4+u$. This gives one too many formal
  variables to be regarded as Stiefel-Whitney roots, but they can
  nevertheless be used to compute:
  \begin{align*}
    w_1^k&w_{n-k}[\RP(A \oplus B^*)] \\&= \int_{\RP(A \oplus B^*)}(
    x_1+u + x_2+u + x_3+u + x_4+u + \pi^* w_1(TZ) )^k w_{n-k}(\RP(A
    \oplus B^*)) \\
    \intertext{The $u$'s cancel modulo 2 to give:}
    &= \int_{Z}\pi_* \big[(x_1 + x_2 + x_3 + x_4 + \pi^* w_1(TZ) )^k
    w_{n-k}(\RP(A \oplus B^*)) \big] \\
    \intertext{Since $\pi_*$ is a map of $\H^*(Z)$-modules and since
      the $x_i$'s pull back from $\H^*(Z)$, this equals:}
    &= \int_{Z} (x_1 + x_2 + x_3 + x_4 + w_1(TZ) )^k \, \pi_* \big[
    w_{n-k}(\RP(A \oplus B^*)) \big]
  \end{align*}
  which equals zero since $\pi_* \big[ w_{n-k}(\RP(A \oplus B^*))
  \big] = 0$. Indeed, the above splitting of $T\RP(A \oplus B^*)$
  implies that:
  \begin{align*}
    \pi_* \big[ w_{n-k}(\RP(A \oplus B^*)) \big] =& \pi_* \big[ \pi^*
    w_{n-k}(TZ) + \pi^* w_{n-k-1}(TZ) \, w_1(T_\mathrm{rel}) \\
    &\quad+ \pi^* w_{n-k-2}(TZ) \, w_2(T_\mathrm{rel}) + \pi^*
    w_{n-k-3}(TZ) \, w_3(T_\mathrm{rel}) \big] \\
    \intertext{and since $\pi_*$ is a map of $\H^*(Z)$-modules which
      decreases degree by 3, this equals:}
    =& w_{n-k-3}(TZ) \, \pi_* \big[ w_3(T_\mathrm{rel}) \big] \in
    \H^{n-k-3}(Z)
  \end{align*}
  which equals zero since $\int_\mathrm{pt} : \H^0(Z) \to \Z/2$ is iso
  and since:
  \begin{align*}
    \int_\mathrm{pt} \pi_* \big[ w_3(T_\mathrm{rel}) \big]
    = \int_{\pi^{-1}(\mathrm{pt}) \iso \RP^3} w_3(T_\mathrm{rel})
    = \int_{\RP^3} w_3(T\RP^3) = \chi(\RP^3) = \text{0 mod 2.}
    \tag*{\qedhere}
  \end{align*}
\end{proof}


\begin{proposition}
  The degree-$2^k$ generators of $\MO_*$ survive to the quotient
  $\MO_*/I$.
\end{proposition}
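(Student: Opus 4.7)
The plan is to take $\RP^{2^k}$ as the degree-$2^k$ generator of $\MO_*$ and to detect its survival in $\MO_*/I$ using a flop-invariant Stiefel-Whitney number supplied by Proposition~\ref{2007-24-aug-w1w-flop-inv}.

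First I would justify the choice of generator. The formula $s_n(w)(\RP^n) = (n+1)\, w_1 O(1)^n$ recalled in Section~2, applied with $n = 2^k$, gives $s_{2^k}(w)[\RP^{2^k}] \equiv 2^k + 1 \equiv 1 \pmod{2}$, so $\RP^{2^k}$ is indecomposable in $\MO_*$ and is a legitimate choice of degree-$2^k$ polynomial generator.

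Next I would specialize Proposition~\ref{2007-24-aug-w1w-flop-inv} to its $k = 0$ case: the top Stiefel-Whitney number $w_n$ vanishes on the ideal $I$ and therefore descends to a linear functional on $\MO_n/I_n$. Evaluating this functional on the chosen generator via $w_*(\RP^n) = (1 + w_1 O(1))^{n+1}$ yields
\[
  w_{2^k}[\RP^{2^k}] = \binom{2^k+1}{2^k} = 2^k + 1 \equiv 1 \pmod{2}.
\]
Since $w_n$ is flop-invariant and nonzero on $\RP^{2^k}$, the generator $\RP^{2^k}$ has nonzero image in $\MO_*/I$, which is precisely what it means to survive.

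I do not anticipate any real obstacle here: the essential content is already packed into Proposition~\ref{2007-24-aug-w1w-flop-inv}, and once that is in hand all that remains is a one-line binomial-coefficient check. As an independent cross-check, the $k = n-1$ case of the proposition gives the flop-invariant number $w_1^n$, which also evaluates to $(2^k+1)^{2^k} \equiv 1 \pmod{2}$ on $\RP^{2^k}$ and therefore provides an alternative detection.
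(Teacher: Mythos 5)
Your computations are correct, and your ``cross-check'' using $w_1^n$ is in fact the substance of the paper's own argument, but your primary argument via $w_n = w_1^0 w_n$ proves something weaker than what ``survive'' requires, and you have not made explicit the identity that makes the $w_1^n$ detection actually work.

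``Survive'' here means \emph{remain an indecomposable generator} of the quotient ring $\MO_*/I$: the image of the degree-$2^k$ generator must not be expressible as a polynomial in lower-degree elements plus something in $I$, since the goal is the presentation $\F_2[\RP^2,\RP^4,\RP^8,\dots]/(\cdots)$. Your functional $w_{2^k}$ vanishes on $I$ and is nonzero on $\RP^{2^k}$, which shows $\RP^{2^k}\notin I$ --- but $w_{2^k}$ does \emph{not} vanish on decomposables (for instance $w_4[\RP^2\times\RP^2]=1$), so it cannot rule out $\RP^{2^k}$ becoming decomposable modulo $I$. The functional that does the job is $s_{2^k}(w)$, which by Thom's formula vanishes on decomposables, and the crucial observation --- which you should state rather than leave implicit in a cross-check --- is the Frobenius identity in characteristic 2:
\[
  s_{2^k}(w) \;=\; t_1^{2^k}+\cdots+t_N^{2^k} \;=\; \bigl(t_1+\cdots+t_N\bigr)^{2^k} \;=\; w_1^{2^k}.
\]
This exhibits the indecomposability detector $s_{2^k}$ as one of the flop-invariant numbers $w_1^k w_{n-k}$ (the $k=n-1$ case of Proposition~\ref{2007-24-aug-w1w-flop-inv}), so it descends to $\MO_*/I$, vanishes on decomposables there, and is nonzero on any degree-$2^k$ generator --- which is exactly ``survival.'' Note this identity is special to powers of 2, which is precisely why generators in other degrees do \emph{not} survive; your $w_n$ argument would ``prove'' survival of $\RP^n$ for any even $n$ and hence cannot be the right mechanism. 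The paper's proof also does not bother to single out $\RP^{2^k}$: the argument applies to any degree-$2^k$ generator, so your paragraph verifying $s_{2^k}(w)[\RP^{2^k}]\ne 0$ is harmless but unnecessary.
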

\begin{proof}
  Any degree-$2^k$ generator of $\MO_*$ (as an $\F_2$-algebra) has
  $s_{2^k}(w) \ne 0$ mod 2. But in characteristic 2:
  \begin{align*}
    s_{2^k}(w) = (t_1^{2^k} + \cdots + t_{2^k}^{2^k}) =
    (t_1+\cdots+t_{2^k})^{2^k} = w_1^{2^k}
  \end{align*}
  which vanishes on $I$ by Proposition~\ref{2007-24-aug-w1w-flop-inv}.
\end{proof}

Now we use flops corresponding to the bundles $A=\pi_1^* O(1) \oplus
\pi_2^* O(1)$ and $B=O^{\oplus 2}$ over $Z=\RP^a \times \RP^b$ for
particular values of $a$ and $b$ to show that all other (suitably
chosen) generators of $\MO_*$ (as an $\F_2$-algebra) do not survive to
the quotient $\MO_*/I$.

\begin{proposition}
  \label{2007-24-aug-kill-gens}
  Consider the projective bundle:
  \begin{align*}
    E = \RP\big(\pi_1^* O(1) \oplus \pi_2^* O(1) \oplus O^{\oplus 2}
    \big)
  \end{align*}
  over $Z=\RP^a \times \RP^b$. Then $s_{a+b+3}[E] \ne 0$ if $a=2^k-2$
  and $b=0,1,\dots,2^k-3$ for any integer $k \ge 2$.

  This gives a sequence of projective bundles:
  \begin{align*}
    E_5,E_6; E_9,E_{10},E_{11},E_{12},E_{13},E_{14};E_{17},E_{18},\dots
  \end{align*}
  with $s_n[E_n]=1$. Since $\MO_*$ has no generators in degrees of the
  form $2^k-1$, this implies that $\MO_*/I$ is generated by the
  manifolds $\RP^{2^k}$, $k \ge 1$.
\end{proposition}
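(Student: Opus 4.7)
The plan is to compute $s_{a+b+3}[E]$ directly via the splitting principle and a projective-bundle pushforward, reducing the question to a mod-$2$ binomial identity that yields to Lucas's theorem.

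First I set $\alpha = w_1(\pi_1^* O(1))$, $\beta = w_1(\pi_2^* O(1))$, and $u = w_1(O(1))$ on $E = \RP(V)$ where $V = \pi_1^* O(1) \oplus \pi_2^* O(1) \oplus O^{\oplus 2}$. The base $Z = \RP^a \times \RP^b$ has tangent bundle with formal Stiefel-Whitney roots $\alpha$ repeated $a+1$ times and $\beta$ repeated $b+1$ times, and the Euler-type splitting $T_\mrm{rel} \oplus O \iso V \tensor O(1)$ furnishes formal roots $\alpha+u, \beta+u, u, u$ for $T_\mrm{rel} \oplus O$. Since the power sum $s_n$ is additive over Whitney sums and we are working mod $2$, adding contributions yields
\begin{align*}
  s_{a+b+3}(w(E)) = (\alpha+u)^{a+b+3} + (\beta+u)^{a+b+3},
\end{align*}
because $\alpha^{a+b+3}$ and $\beta^{a+b+3}$ vanish on $\RP^a$ and $\RP^b$ respectively, the multiplicities $a+1, b+1$ as well as $2u^n$ drop out mod $2$, and the extra zero root coming from the trivial summand in $T_\mrm{rel}\oplus O$ contributes nothing to $s_n$ for $n > 0$.

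Next I push forward along $\pi : E \to Z$. Since $V$ has rank $4$, the standard projective-bundle formula gives $\pi_*(u^{3+j}) = s_j(V) = \sum_{p+q=j}\alpha^p\beta^q$ (the $j$th Segre class, which for our $V$ is the inverse of $(1+\alpha)(1+\beta)$) while $\pi_*(u^j) = 0$ for $j < 3$. Expanding binomially and integrating, $\int_Z \alpha^i s_{a+b-i}(V) = 1$ precisely when $0 \le i \le a$ (it pins down $\alpha^{i+p}\beta^q$ with $p = a-i$, $q = b$), giving $\int_E (\alpha+u)^{a+b+3} = \sum_{i=0}^a \binom{a+b+3}{i} \pmod 2$, with the analogous expression for $(\beta+u)^{a+b+3}$ obtained by swapping $a \leftrightarrow b$. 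Setting $N = a+b+3$ and using $\binom{N}{i} = \binom{N}{N-i}$ to flip the first sum onto the range $[b+3, N]$, the total becomes
\begin{align*}
  \sum_{i=0}^{N}\binom{N}{i} - \binom{N}{b+1} - \binom{N}{b+2} \;\equiv\; \binom{N}{b+1} + \binom{N}{b+2} \;=\; \binom{N+1}{b+2} \pmod 2
\end{align*}
by Pascal's rule, which equals $\binom{2^k + (b+2)}{b+2}$ after substituting $a = 2^k - 2$. For $0 \le b \le 2^k - 3$ we have $2 \le b+2 \le 2^k - 1$, so in binary $2^k + (b+2)$ is just a single $1$ in position $k$ concatenated with the bits of $b+2$; Lucas's theorem immediately makes this binomial coefficient odd.

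I expect the main obstacle to be the bookkeeping that folds the two partial binomial sums into a single binomial coefficient; once the identity $\sum_{i=0}^a \binom{N}{i} + \sum_{i=0}^b \binom{N}{i} \equiv \binom{N+1}{b+2} \pmod 2$ (with $N = a+b+3$) is in hand, Lucas finishes in one line. The final clause of the proposition then follows from Totaro's description of real flops recalled earlier: each $E_n = \RP(A \oplus B^*)$ with $A = \pi_1^* O(1) \oplus \pi_2^* O(1)$ and $B = O^{\oplus 2}$ arises as the total space of a projective bundle associated to a real classical flop, so $E_n \in I$. Having $s_n[E_n] = 1$ lets us take $E_n$ as our chosen polynomial generator of $\MO_*$ in every degree $n$ other than $2^k$ or $2^k - 1$, killing all indecomposables of $\MO_*/I$ in those degrees. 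In degrees $2^k$ the surviving indecomposable is represented by $\RP^{2^k}$ by the preceding proposition, and in degrees $2^k - 1$ there are no indecomposables in $\MO_*$ at all. Hence $\MO_*/I$ is generated as an $\F_2$-algebra by the manifolds $\RP^{2^k}$, $k \ge 1$.
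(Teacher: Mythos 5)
Your proof is correct, and it takes a genuinely different route from the paper's. The paper quotes Totaro's explicit formula for $s_n$ of the projective bundle $\til{\CP}(A\oplus B)$, specializes it mod $2$ to the bundles $A=\pi_1^*O(1)\oplus\pi_2^*O(1)$, $B=O^{\oplus 2}$ over $\RP^{2^k-2}\times\RP^b$ to obtain $s_{a+b+3}[E]=\binom{2^k+b}{2^k-2}+\binom{2^k+b}{b}$, and then checks oddness by a direct $2$-adic valuation computation via Legendre's formula $\mathrm{ord}_2(n!)=n-\alpha_2(n)$. You instead rederive the pushforward from scratch: you use the splitting principle to reduce $s_{a+b+3}(TE)$ to $(\alpha+u)^{a+b+3}+(\beta+u)^{a+b+3}$, push down via the Segre-class identity $\pi_*(u^{3+j})=s_j(V)$, and after flipping the binomial sum and applying Pascal's rule you land on the single closed form $\binom{2^k+b+2}{b+2}$, which is visibly odd by Lucas's theorem since $b+2<2^k$. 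What the paper's approach buys is reuse of Totaro's machinery (which is also invoked elsewhere in the section); what yours buys is a self-contained derivation and a cleaner final identity where the oddness is immediate rather than requiring a two-step valuation estimate. Your handling of the final clause (that $E_n$ can replace the standard polynomial generator of $\MO_*$ in each degree $n\neq 2^k, 2^k-1$, that $E_n\in I$ by Totaro's identification of $\RP(A\oplus B^*)$ with the difference of small resolutions, and that the degrees $2^k-1$ carry no indecomposables) matches the paper's intended argument.
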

\begin{proof}
  Totaro computed a formula for $s_n(c)(\til{\CP}(A \oplus B))$ in
  \cite[p.~77]{totaro00}:
  \begin{align*}
    \int_Z \quad \sum_{\substack{i_1+i_2+i_3+i_4=n-3\\i_r \ge 0}}
    x_1^{i_1} x_2^{i_2} x_3^{i_3} x_4^{i_4} \left[ (-1)^{i_2}
      \tbinom{n-1}{i_1} + (-1)^{i_1} \tbinom{n-1}{i_2} + (-1)^{i_4+1}
      \tbinom{n-1}{i_3} + (-1)^{i_3+1} \tbinom{n-1}{i_4} \right]
  \end{align*}
  where $x_1,x_2$ are the Chern roots of $A$ and $x_3,x_4$ the Chern
  roots of $B$. Interpreting this mod 2 gives a formula for
  $s_n(w)(\RP(A \oplus B^*))$ in terms of the Stiefel-Whitney roots of
  $A$ and $B$.

  {\newcommand{\ord}{\mathrm{ord}_2}
   \renewcommand{\a}{\alpha_2}

   For $E$: 
   \begin{align*}
     x_1 &= \pi_1^* w_1(O(1)) = g_1 \in \H^1(\RP^{2^k-2} \times \RP^b) \\
     x_2 &= \pi_2^* w_1(O(1)) = g_2 \in \H^1(\RP^{2^k-2} \times \RP^b)
     \\
     x_3 &=x_4=0
   \end{align*}
   so Totaro's formula gives:
  \begin{align*}
    s_{a+b+3}[E] &= \int_Z x_1^{2^k-2} x_2^b \big[
    \tbinom{2^k+b}{2^k-2} + \tbinom{2^k+b}{b} +
    \tbinom{2^k+b}{0} + \tbinom{2^k+b}{0} \big]
    = \tbinom{2^k+b}{2^k-2} + \tbinom{2^k+b}{b} \,.
  \end{align*}
  This number is odd. Indeed the identity $\ord(n!)=n-\a(n)$, where
  $\ord(n!)$ is the number of times $2$ divides $n!$ and $\a(n)$ is
  the number of nonzero digits in the binary expansion of $n$, lets us
  compute:
  \begin{align*}
    \ord \tbinom{2^k+b}{b} &= \ord
    \tfrac{(2^k+b)!}{b! \, 2^k!} \\
    &= \ord((2^k+b)!) - \ord(b!) - \ord(2^k!) \\
    &= 2^k+b-\a(2^k+b)-b+\a(b)-2^k+\a(2^k) \\
    &= \a(b)-\a(2^k+b)+1 \\
    &= 0 \quad \text{since $b<2^k$} \displaybreak[1] \\
    \ord \tbinom{2^k+b}{2^k-2} &= \ord \tfrac{(2^k+b)!}{(2^k-2)! \, (b+2)!} \\
    &= \ord((2^k+b)!) - \ord((b+2)!) - \ord((2^k-2)!) \\
    &= 2^k + b - \a(2^k+b) - \ord((b+2)!) - 2^k + 2 +
    \a(2^k-2) \\
    &= b - \a(2^k+b) - \ord (b+2)! + 2 + (k-1) \\
    &= b - \a(b) - \ord((b+2)!) + k \quad \text{since $b<2^k$} \\
    &= \ord(b!) - \ord((b+2)!) + k \\
    &> 0 \quad \text{since $b \le 2^k-3$.}
  \end{align*}
  This final inequality holds since $(b+2)!/b! = (b+2)(b+1)$ so
  $\ord((b+2)!)-\ord(b!)$ equals $\ord(b+2)$ or $\ord(b+1)$, which are
  both $<k$ since $b \le 2^k-3$. }
\end{proof}

All that remains to prove Theorem~\ref{thm:MOmodFlops} is to determine
what relations the generators $\RP^{2^k}$ of $\MO_*/I$ satisfy. The
key is the following formula.

\begin{lemma}
  \label{2007-24-aug-formula}
  Let $2n=2b_1+\cdots+2b_r$ be a partition. Then:
  \begin{align*}
    w_1^0w_{2n}&[\RP^{2b_1} \times \dots \times \RP^{2b_r}] = 1 \\
    \intertext{and if $2i \ge 2$ has binary expansion $2i=\sum_{k=1}^s
      2^{c_k}$ then:}
    w_1^{2i}w_{2n-2i}&[\RP^{2b_1} \times \dots \times \RP^{2b_r}]
  \end{align*}
  equals the number (mod 2) of sequences $1 \le j_1,\dots,j_s \le r$
  such that the binary expansion of $2b_{j_k}$ contains $2^{c_k}$ for
  all $1 \le k \le s$.
\end{lemma}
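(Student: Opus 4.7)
The strategy is direct calculation via the splitting principle, reducing everything to binomial coefficients mod 2 and applying Lucas' theorem. Write $H^*(\RP^{2b_j}, \F_2) = \F_2[a_j]/(a_j^{2b_j+1})$ and use the Künneth isomorphism and product formula to get $w(M) = \prod_j (1+a_j)^{2b_j+1}$ on $M = \RP^{2b_1} \times \cdots \times \RP^{2b_r}$. The fundamental class $[M]$ pairs the monomial $a_1^{2b_1}\cdots a_r^{2b_r}$ to $1$. Since each $2b_j+1$ is odd, $w_1(M) = \sum_j a_j$. For the base case $2i=0$, the coefficient of $\prod_j a_j^{2b_j}$ in $w_{2n}(M)$ is $\prod_j \binom{2b_j+1}{2b_j} = \prod_j (2b_j+1) \equiv 1 \pmod 2$, which gives the first identity.

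For $2i \ge 2$ with binary expansion $2i = \sum_{k=1}^s 2^{c_k}$ (so every $c_k \ge 1$), apply Frobenius in characteristic 2 to get
\begin{align*}
  w_1(M)^{2i} = \prod_{k=1}^s \Bigl(\sum_{j=1}^r a_j^{2^{c_k}}\Bigr) = \sum_{(j_1,\ldots,j_s) \in [r]^s} \prod_{k=1}^s a_{j_k}^{2^{c_k}}.
\end{align*}
For a fixed sequence $(j_1,\ldots,j_s)$, set $q_j = \sum_{k:\, j_k = j} 2^{c_k}$; then the corresponding monomial is $\prod_j a_j^{q_j}$, and since the $c_k$ are distinct and all $\ge 1$, the binary expansion of $q_j$ is exactly $\{2^{c_k} : j_k = j\}$, and in particular $q_j$ is even.

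Multiplying by $w_{2n-2i}(M) = \sum \prod_j \binom{2b_j+1}{p_j} a_j^{p_j}$ and extracting the coefficient of $\prod_j a_j^{2b_j}$, each sequence contributes $\prod_j \binom{2b_j+1}{2b_j - q_j} = \prod_j \binom{2b_j+1}{q_j+1} \pmod 2$. The main step is now to apply Lucas' theorem: the binary expansion of $2b_j+1$ is $\{1\}$ together with the binary expansion of $2b_j$, and since $q_j$ is even, the binary expansion of $q_j+1$ is $\{1\}$ together with the binary expansion of $q_j$. Hence $\binom{2b_j+1}{q_j+1} \equiv 1 \pmod 2$ iff the binary expansion of $q_j$ is contained in that of $2b_j$, equivalently iff $2^{c_k}$ appears in the binary expansion of $2b_{j_k}$ for every $k$ with $j_k = j$.

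Summing over all sequences, the total $w_1^{2i} w_{2n-2i}[M]$ is the number, mod 2, of sequences $(j_1,\ldots,j_s)$ for which $2^{c_k}$ lies in the binary expansion of $2b_{j_k}$ for each $k$, proving the formula. The main obstacle is the careful Lucas-theorem bookkeeping in the last step — specifically, exploiting the parity of $q_j$ (which requires $2i \ge 2$ so that every $c_k \ge 1$) to control the extra $+1$ when comparing $q_j+1$ with $2b_j+1$; this is also exactly why the case $2i = 0$ must be treated separately.
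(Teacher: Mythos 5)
Your proof is correct and follows essentially the same route as the paper: Künneth/splitting-principle reduction, Frobenius in characteristic 2 to turn $w_1^{2i}$ into $\sum_{(j_1,\dots,j_s)}\prod_k a_{j_k}^{2^{c_k}}$, and then a product of binomial coefficients for each sequence. The one place you genuinely streamline is the endgame: the paper determines the parity of $\binom{2b_j+1}{2b_j-q_j}$ by a telescoping $\mathrm{ord}_2$-count using $\mathrm{ord}_2(n!) = n - \alpha_2(n)$, whereas you flip the binomial to $\binom{2b_j+1}{q_j+1}$ and apply Lucas' theorem directly, using the observation (which the paper uses only implicitly) that $q_j$ is even because every $c_k \ge 1$. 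Both give the same containment criterion; your version makes the role of the hypothesis $2i \ge 2$ (hence all $c_k \ge 1$) cleaner and avoids the repeated inequality-chasing.
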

\begin{proof}
  The first equality is immediate since:
  \begin{align*}
    w_{2n}\big[\RP^{2b_1} \times \cdots \times \RP^{2b_r}\big]
    &= \prod_{j=1}^r \binom{2b_j+1}{2b_j} = \prod_{j=1}^r
    (2b_j+1) = 1.
  \end{align*}

  The second equality is more subtle:
  \begin{align*}
    w_1^{2i}&w_{2n-2i}[\RP^{2b_1} \times \dots \times \RP^{2b_r}] \\
    &= ((2b_1+1)g_1 + \cdots + (2b_r+1)g_r)^{\sum 2^{c_k}}
    \sigma_{2n-2i}(\overbrace{g_1,\dots,g_1}^{2b_1+1},\cdots,
    \overbrace{g_r,\dots,g_r}^{2b_r+1}) \\
    \intertext{which in characteristic 2 equals:}
    &= \left[ \prod_{k=1}^s (g_1^{2^{c_k}} + \cdots + g_r^{2^{c_k}})
      \right] \sigma_{2n-2i}(g_1,\dots,g_r)\\
    &= \left[ \sum_{1 \le j_1,\dots,j_s \le r} g_{j_1}^{2^{c_1}}
      g_{j_2}^{2^{c_2}} \cdots g_{j_s}^{2^{c_s}} \right]
    \sigma_{2n-2i}(g_1,\dots,g_r)\\
    &= \sum_{1 \le j_1,\dots,j_s \le r} \quad \prod_{j=1}^r
    \binom{2b_j+1}{2b_j-\sum \{ 2^{c_k} \,:\,j_k=j\}} \,.
  \end{align*}
  {\newcommand{\ord}{\mathrm{ord}_2}
    \renewcommand{\a}{\alpha_2}
  Each of these binomial factors satisfies:
  \begin{align*}
    \ord \binom{2b_j+1}{2b_j-\sum_{k\,:\,j_k=j} 2^{c_k}}
    &= \ord \binom{2b_j+1}{2b_j-2^{k_1}-\cdots-2^{k_\ell}} \\
    &= -\a(2b_j+1)+\a(2b_j-2^{k_1}-\cdots-2^{k_\ell}) +
    \a(2^{k_1}+\cdots+2^{k_\ell}+1) \\
    &= -\a(2b_j)+\a(2b_j-2^{k_1}-\cdots-2^{k_\ell}) + \ell
  \end{align*}
  which equals zero iff the binary expansion of $2b_j$ contains
  $2^{k_1}, \dots, 2^{k_\ell}$. Indeed, note that:
  \begin{align*}
    \ord \, n = \ord \frac{n!}{(n-1)!} = \a(n)-\a(n-1)-1
  \end{align*}
  which implies that $\a(n)-\a(n-1) \ge 1$ with equality iff $n$ is
  odd. More generally, $\a(n)-\a(n-2^k) \ge 1$ with equality iff the
  binary expansion of $n$ contains $2^k$. Assuming without loss of
  generality that $k_1<k_2<\cdots<k_\ell$, use this fact repeatedly to
  conclude that:
  \begin{enumerate}
  \item $\a(2b_j)-\a(2b_j-2^{k_\ell}) \ge 1$ with equality
    iff the binary expansion of $b_j$ contains $2^{k_\ell}$.
  \item $\a(2b_j-2^{k_\ell})-\a(2b_j-2^{k_{\ell-1}}-2^{k_\ell}) \ge 1$
    with equality iff the binary expansion of $2b_j-2^{k_\ell}$
    contains $2^{k_{\ell-1}}$, which occurs iff the binary expansion
    of $2b_j$ contains $2^{k_{\ell-1}}$.
  \item[] $\vdots$
  \item[($\ell$)] $\a(2b_j-2^{k_2}-\cdots-2^{k_\ell}) - \a(2b_j-2^{k_1} -
    \cdots-2^{k_\ell})) \ge 1$ with equality iff the binary expansion
    of $2b_j-2^{k_2}-\cdots-2^{k_\ell}$ contains $2^{k_1}$, which
    occurs iff the binary expansion of $2b_j$ contains $2^{k_1}$.
  \end{enumerate}
  Add these together to conclude that
  $\a(2b_j)-\a(2b_j-2^{k_1}-\cdots-2^{k_\ell}) \ge \ell$ with equality
  iff the binary expansion of $2b_j$ contains
  $2^{k_1},\dots,2^{k_\ell}$, as desired.

  \medskip

  Thus the nonzero summands are those corresponding to sequences
  $j_1,\dots,j_s$ such that the binary expansion of $2b_{j_k}$
  contains $2^{c_k}$ for all $1 \le k \le r$, as desired.  }
\end{proof}

A consequence of Lemma~\ref{2007-24-aug-formula} is that:
\vspace{-3ex}
\begin{align*}
  w_1^{2i}w_{2n-2i}[ \RP^{2^a} \times \RP^{2^a} \times \RP^J ] =
  w_1^{2i}w_{2n-2i}[ \overbrace{\RP^2 \times \cdots \times
    \RP^2}^{2^a} \times \RP^J ]
\end{align*}
for all $0 \le 2i \le 2n=2^{a+1} + \sum J$. The final task is to show
that these are precisely the relations in $\MO_*/I$.

First we will show that manifolds of the form:
\begin{align*}
  \RP^2 \times \cdots \times \RP^2 \times \RP^{2^{a_1}} \times \dots
  \times \RP^{2^{a_r}} \quad \text{with $4 \le 2^{a_1} < 2^{a_2} <
    \cdots < 2^{a_r}$}
\end{align*}
are linearly independent in $\MO_*/I$. Note that in a given dimension
$2n$, such manifolds correspond uniquely to integers $0 \le 4j \le
2n$. Namely, given an integer $0 \le 4j \le 2n$ with binary expansion
$4j=2^{a_1}+\cdots+2^{a_r}$, let $J_{2n}(4j)$ denote the partition:
\begin{align*}
  2n=\overbrace{2+\cdots+2}^{=2n-4j}
    +\overbrace{2^{a_1}+\cdots+2^{a_r}}^{=4j}
\end{align*}

An important consequence of Lemma~\ref{2007-24-aug-formula} can then
be stated as follows.

\begin{corollary}
  \label{2007-24-aug-formula-2}
  Let $0 \le 4i, 4j \le 2n$ be integers with binary expansions
  $4i=\sum_l 2^{b_l}$, $4j=\sum_k 2^{a_k}$. Then $w_1^{4i}
  w_{2n-4i}\big[ \RP^{J_{2n}(4j)} \big]$ equals 1 iff $\{b_k\}
  \subseteq \{a_l\}$.
\end{corollary}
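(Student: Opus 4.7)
The plan is to invoke Lemma~\ref{2007-24-aug-formula} directly. The partition $J_{2n}(4j)$ has blocks of size $2$ appearing $n-2j$ times (one for each $\RP^2$ factor), together with one block of size $2^{a_k}$ for each $k$ (coming from the factor $\RP^{2^{a_k}}$). Applying the lemma, $w_1^{4i} w_{2n-4i}[\RP^{J_{2n}(4j)}]$ equals mod~2 the number of sequences assigning to each exponent $2^{b_l}$ in the binary expansion of $4i$ a block of the partition whose size contains $2^{b_l}$ in \emph{its} binary expansion.

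The crucial observation is that, because $4i$ is divisible by $4$, every $b_l \ge 2$. The block of size $2 = 2^1$ therefore contains $2^{b_l}$ for no $b_l$, so the $\RP^2$ factors contribute nothing. On the other hand, each block of size $2^{a_k}$ — itself a single power of $2$ with $a_k \ge 2$ — contains $2^{b_l}$ in its binary expansion iff $a_k = b_l$. Since the $a_k$ are pairwise distinct (they are the exponents in the binary expansion of $4j$), each $b_l$ matches at most one block of the partition.

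Consequently the number of valid sequences is $1$ when every $b_l$ lies in $\{a_k\}$ — the sequence is uniquely determined by sending each $b_l$ to the unique block $2^{a_k}$ with $a_k = b_l$ — and is $0$ otherwise. This is the claimed equivalence. I do not anticipate any real obstacle: the corollary is a transparent specialization of Lemma~\ref{2007-24-aug-formula}, the only points to verify being the divisibility $4 \mid 4i$ (which rules out the $\RP^2$ blocks) and the pairwise distinctness of the $a_k$ (which forces the matching to be unique when it exists).
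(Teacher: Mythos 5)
Your proposal is correct and takes the same approach as the paper: the paper's proof is just the one-liner ``This is immediate since if there is a sequence as in Lemma~\ref{2007-24-aug-formula} then it is unique,'' and your argument supplies the two observations that make this immediate, namely that $4 \mid 4i$ forces every $b_l \ge 2$ so the $\RP^2$ factors never match, and that the $a_k$ are distinct so each $b_l$ matches at most one factor $\RP^{2^{a_k}}$.
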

\begin{proof}
  This is immediate since if there is a sequence as in
  Lemma~\ref{2007-24-aug-formula} then it is unique.
\end{proof}

\begin{corollary}
  \label{2007-24-aug-lin-ind}
  The manifolds: $$\big\{ \RP^{J_{2n}(4j)} : 0 \le 4j \le 2n \big\}$$
  are linearly independent in $\MO_{2n}/I$.
\end{corollary}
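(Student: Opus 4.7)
The strategy is to pair the manifolds $\RP^{J_{2n}(4j)}$ against the flop-invariant Stiefel-Whitney numbers $w_1^{4i} w_{2n-4i}$ and exhibit the resulting $\F_2$-matrix as triangular with unit diagonal. By Proposition~\ref{2007-24-aug-w1w-flop-inv}, each $w_1^{4i} w_{2n-4i}$ vanishes on $I$ and hence descends to a well-defined $\F_2$-linear functional on $\MO_{2n}/I$; it therefore suffices to produce a square system of such functionals whose evaluation matrix on the $\RP^{J_{2n}(4j)}$ is nonsingular.

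The concrete step is to form the matrix $M$ with rows indexed by integers $i$ with $0 \le 4i \le 2n$ and columns indexed by integers $j$ in the same range, defined by
\begin{align*}
  M_{ij} = w_1^{4i} w_{2n-4i}\big[\RP^{J_{2n}(4j)}\big].
\end{align*}
By Corollary~\ref{2007-24-aug-formula-2}, if $4i = \sum_l 2^{b_l}$ and $4j = \sum_k 2^{a_k}$ are the binary expansions, then $M_{ij} = 1$ precisely when $\{b_l\} \subseteq \{a_k\}$, and otherwise $M_{ij}=0$.

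Now order the indices so that $4i$ is increasing. If $4i > 4j$ then the bit-set of $4i$ cannot be a subset of the bit-set of $4j$ (otherwise $4i \le 4j$), so $M_{ij} = 0$; and reflexivity of $\subseteq$ gives $M_{ii}=1$. Hence $M$ is upper triangular with unit diagonal over $\F_2$, and is therefore invertible. Any $\F_2$-linear relation $\sum_j c_j\, [\RP^{J_{2n}(4j)}] = 0$ in $\MO_{2n}/I$ would, on pairing with each $w_1^{4i} w_{2n-4i}$, yield $\sum_j M_{ij} c_j = 0$ for every $i$, forcing all $c_j = 0$.

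I do not anticipate a genuine obstacle: Lemma~\ref{2007-24-aug-formula} and Corollary~\ref{2007-24-aug-formula-2} have already done the combinatorial work, and the only remaining point is the easy observation that subset containment of binary digit-sets is compatible with the natural order on integers, which makes $M$ triangular.
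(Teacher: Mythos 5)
Your proof is correct and follows the paper's approach: pair the candidate manifolds against the flop-invariant numbers $w_1^{4i}w_{2n-4i}$ (well-defined on the quotient by Proposition~\ref{2007-24-aug-w1w-flop-inv}) and show the matrix from Corollary~\ref{2007-24-aug-formula-2} is nonsingular. The only difference is that you conclude nonsingularity by noting the matrix is already triangular with unit diagonal under the natural ordering, whereas the paper first performs a M\"obius-type change of basis to diagonalize it; your version is slightly more direct.
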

\begin{proof}
  Consider the matrix:
  \begin{align*}
    \Big[ w_1^{4i}w_{2n-4i}\big[ \RP^{J_{2n}(4j)} \big] \Big]_{0 \le
      4i,4j \le 2n}
  \end{align*}
  By changing bases to:
  \begin{align*}
    b_{4j} = \RP^{J_{2n}(4j)} + \sum_{4j'} b_{4j'}
  \end{align*}
  where $4j'$ ranges over all sub-sums of the binary expansion of
  $4j$, we obtain the matrix:
  \begin{align*}
    \Big[ w_1^{4i}w_{2n-4i}\big[ b_{4j} \big] \Big]_{0 \le 4i,4j \le 2n}
  \end{align*}
  which is diagonal by Corollary~\ref{2007-24-aug-formula-2}.

  The result then follows since by
  Proposition~\ref{2007-24-aug-w1w-flop-inv} the Stiefel-Whitney
  numbers $w_1^{4i}w_{2n-4i}$ vanish on $I$ and hence are well-defined
  on the quotient $\MO_{2n}/I$.
\end{proof}

Now we find the relations.

\begin{proposition}
  \label{2007-24-aug-twisted-rel}
  Consider the projective bundle:
  \begin{align*}
    R_{2^{a+1}} = \RP(O(1) \oplus O^{\oplus 3})
  \end{align*}
  over $Z=\RP^{2^{a+1}-3}$. If $2^a \ge 4$ then
  $s_{2^a,2^a}[R_{2^{a+1}}]=1$.
\end{proposition}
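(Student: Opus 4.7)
The plan is to compute $s_{2^a,2^a}[R_{2^{a+1}}]$ directly via the formal-root calculus of Stiefel-Whitney classes, exploiting the fact that $R_{2^{a+1}} = \RP(O(1) \oplus O^{\oplus 3})$ is a $\RP^3$-bundle over a projective space with a very simple cohomology ring. Concretely, set $g = \pi^* w_1(O_Z(1)) \in \H^1(R_{2^{a+1}}, \Z/2)$ and $\xi = w_1(O_R(1))$, so that the projective bundle relation reads $\xi^4 = g \xi^3$. From the relative Euler formula $T_\mathrm{rel} \oplus \mathcal{O} \iso (O(1) \oplus O^{\oplus 3}) \tensor O_R(1)$, I read off four formal Stiefel-Whitney roots $\{g + \xi, \xi, \xi, \xi\}$ for the vertical tangent bundle, while the Euler sequence for $Z = \RP^{2^{a+1}-3}$ supplies $2^{a+1} - 2$ copies of $g$ as roots of $TZ$.

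Next I would expand $s_{2^a,2^a}(w) = \sum_{i<j} t_i^{2^a} t_j^{2^a}$ over these roots, splitting the sum into (i) pairs inside the four relative roots, (ii) mixed pairs, and (iii) pairs inside the $g$'s. Working mod $2$ collapses much of this: the multiplicity $3$ becomes $1$, the even factor $2^{a+1} - 2$ kills every mixed term, and $\binom{2^{a+1}-2}{2} = (2^a - 1)(2^{a+1} - 3) \equiv 1 \pmod 2$. Applying the Frobenius $(g+\xi)^{2^a} = g^{2^a} + \xi^{2^a}$ in characteristic~$2$ then condenses the whole expression to
\begin{align*}
  s_{2^a,2^a}(w(TR_{2^{a+1}})) \equiv g^{2^a} \xi^{2^a} + g^{2^{a+1}} \pmod{2},
\end{align*}
and the second term vanishes since $g^{2^{a+1}} = 0$ in $\H^*(Z, \Z/2)$ for dimension reasons.

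Finally, I would integrate $g^{2^a} \xi^{2^a}$ over $R_{2^{a+1}}$ by pushing forward to $Z$. The standard projective bundle formula gives $\pi_* (\xi^{3+i}) = \bar{s}_i(E)$, where $\bar{s}(E) = 1/w(E) = 1/(1+g) = \sum_j g^j$ is the Segre class. Hence $\pi_*(\xi^{2^a}) = g^{2^a - 3}$, which requires the hypothesis $2^a \ge 4$, and the integral reduces to $\int_Z g^{2^{a+1}-3} = 1$ since $\dim Z = 2^{a+1} - 3$.

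The main obstacle is bookkeeping: carefully splitting the double sum $\sum_{i<j}$ over $4 + (2^{a+1}-2)$ roots into the three classes and tracking binomial-coefficient parities mod~$2$. The only place real arithmetic enters is the identity $\binom{2^{a+1}-2}{2} \equiv 1$ and the combinatorics of $(g+\xi)^{2^a}$ in characteristic~$2$. The hypothesis $2^a \ge 4$ is used precisely to guarantee that $\pi_*(\xi^{2^a}) = g^{2^a - 3}$ is a genuine Segre class rather than zero, so the surviving term indeed pairs nontrivially with $[Z]$.
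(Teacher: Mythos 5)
Your proposal is correct and follows essentially the same route as the paper: expand $s_{2^a,2^a}$ over the formal roots of $T_{\mathrm{rel}}$ and $\pi^*TZ$, observe that all terms except $g^{2^a}u^{2^a}$ vanish mod~2, then push forward via the Segre-class identity $\pi_*(u^{2^a})=g^{2^a-3}$ and integrate over $Z=\RP^{2^{a+1}-3}$. The only cosmetic difference is that you discard the mixed and base contributions already at the symmetric-function level (via the even factor $2^{a+1}-2$ and the dimension bound on $g$), whereas the paper carries them along and lets $\pi_*$ annihilate them.
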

\begin{proof}
  The Stiefel-Whitney roots of $O(1) \oplus O^{\oplus 3}$ are all zero
  except $x_1=w_1(O(1))=g \in \H^1(\RP^{2^{a+1}-3})$. This lets us
  compute:
  \begin{align*}
    s_{2^a,2^a}[R_{2^{a+1}}]
    &= \int_Z \pi_* \big[ \left(\tbinom{3}{1} (g^{2^a}+u^{2^a})u^{2^a}
      + \tbinom{3}{2} u^{2^{a+1}}\right) +
      \left(g^{2^a}+u^{2^a}+3u^{2^a}\right) s_{2^a}(TZ) +
      s_{2^a,2^a}(TZ) \big] \\
    &= \int_Z \pi_* \big[ g^{2^a} u^{2^a} + g^{2^a} s_{2^a}(TZ) +
      s_{2^a,2^a}(TZ) \big] \\
    &= \int_Z g^{2^a} \pi_*(u^{2^a}) = \int_{\RP^{2^{a+1}-3}}
    g^{2^{a+1}-3} = 1
  \end{align*}
  since:
  \begin{align*}
    \pi_*(u^{2^a}) = c_{i-(r-1)}(-O(1) \oplus O^{\oplus
      3})=\sum_{\substack{i_1+i_2+i_3+i_4=2^a-3\\ i_j \ge 0}}
    x_1^{i_1} x_2^{i_2} x_3^{i_3} x_4^{i_4} = g^{2^a-3}
  \end{align*}
  and since $2^a \ge 4$.
\end{proof}

The projective bundles $R_{2^a}$ thus give relations in each degree
$2^a \ge 4$ of $\MO_*/I$. These relations are not particularly simple
to state but their existence alone implies the following simpler
relations.

\begin{proposition}
  \label{2007-24-nice-rels}
  The differences $(\RP^{2^a})^2 - (\RP^2)^{2^a}$ for $a \ge 2$ are in
  the ideal~$I$.
\end{proposition}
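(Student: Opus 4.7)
The plan is induction on $a$: compare invariant Stiefel-Whitney numbers of the two products using Lemma~\ref{2007-24-aug-formula} and combine the inductive hypothesis with Proposition~\ref{2007-24-aug-twisted-rel} to pin down the needed relation.

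First, I would apply Lemma~\ref{2007-24-aug-formula} to both products. For $(\RP^2)^{2^a}$ each factor has $2b_j = 2$, so the only potentially nonzero $w_1^{2i}w_{2^{a+1}-2i}$ occurs at $2i = 2$, where the lemma gives value $2^a \equiv 0 \pmod 2$. For $(\RP^{2^a})^2$ each factor has $2b_j = 2^a$, so the only potentially nonzero such number occurs at $2i = 2^a$, with value $2 \equiv 0 \pmod 2$. Both products share $w_{2^{a+1}} = 1$. Hence every invariant $w_1^{4i}w_{2^{a+1}-4i}$ takes the same value on the two products: $1$ at $4i = 0$ and $0$ otherwise.

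Assume inductively that $(\RP^{2^b})^2 \equiv (\RP^2)^{2^b} \pmod I$ for $2 \le b < a$. Together with Proposition~\ref{2007-24-aug-kill-gens}, these relations reduce every spanning monomial of $\MO_{2^{a+1}}/I$ to either a $\RP^{J_{2^{a+1}}(4j)}$ or the single extra element $(\RP^{2^a})^2$. Corollary~\ref{2007-24-aug-lin-ind} supplies the lower bound $\dim \MO_{2^{a+1}}/I \ge 2^{a-1}+1$, so one additional relation will collapse this $(2^{a-1}+2)$-element spanning set to a basis of the $\RP^J$'s alone. Proposition~\ref{2007-24-aug-twisted-rel} provides that relation via $R_{2^{a+1}} \equiv 0 \pmod I$. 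To verify it genuinely involves $(\RP^{2^a})^2$ rather than collapsing into a tautology among the linearly independent $\RP^J$'s, I would evaluate the non-invariant number $s_{2^a,2^a}$: it equals $(2^a+1)^2 \equiv 1$ on $(\RP^{2^a})^2$; it equals $1$ on $R_{2^{a+1}}$ by Proposition~\ref{2007-24-aug-twisted-rel}; and it vanishes on every $\RP^{J_{2^{a+1}}(4j)}$, since for $a \ge 2$ no such product has exactly two factors of dimension $2^a$. So $\{\RP^{J_{2^{a+1}}(4j)}\}$ is in fact a basis of $\MO_{2^{a+1}}/I$.

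Finally, by Corollary~\ref{2007-24-aug-formula-2} the invariants $w_1^{4i}w_{2^{a+1}-4i}$ pair with this basis via the zeta matrix of the Boolean lattice of bit-set containments, a matrix that is self-inverse modulo~2. Inverting, the shared Stiefel-Whitney vector from the first step (supported only at $4i = 0$) uniquely determines the expansion of each product, giving $(\RP^{2^a})^2 \equiv \RP^{J_{2^{a+1}}(0)} = (\RP^2)^{2^a} \pmod I$. The hard part is the middle paragraph: without the $s_{2^a,2^a}$ computation one cannot exclude that Proposition~\ref{2007-24-aug-twisted-rel}'s relation collapses into a tautology among the $\RP^J$'s, leaving $(\RP^{2^a})^2$ unconstrained; the $s_{2^a,2^a}$ discriminator rules that out.
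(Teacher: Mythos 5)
Your overall strategy mirrors the paper's (use $R_{2^{a+1}}$, the $s_{2^a,2^a}$ discriminator, the nonsingular pairing matrix, and induction), but the crucial middle step has a genuine gap. You want to conclude from $R_{2^{a+1}} \equiv 0 \pmod I$ and $s_{2^a,2^a}[R_{2^{a+1}}]=1$ that the expansion of $R_{2^{a+1}}$ in your spanning set has coefficient $1$ on $(\RP^{2^a})^2$. But $s_{2^a,2^a}$ is \emph{not} invariant under $I$ (indeed $s_{2^a,2^a}[R_{2^{a+1}}]=1$ with $R_{2^{a+1}} \in I$ is exactly what shows this), so you cannot read off a coefficient in $\MO_{2^{a+1}}/I$ by evaluating $s_{2^a,2^a}$. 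As stated, if the $\RP^J$'s and $(\RP^{2^a})^2$ were linearly independent mod $I$, then $R_{2^{a+1}} \equiv 0$ would simply mean all coefficients vanish — no contradiction arises from your $s_{2^a,2^a}$ values, because those are computed in $\MO_*$, not in $\MO_*/I$. What is needed (and what the paper does) is to fix an explicit $\F_2$-algebra generating set $\RP^2, \RP^4, E_5, E_6, \RP^8, \dots$ of $\MO_*$ and expand $R_{2^{a+1}}$ \emph{in $\MO_{2^{a+1}}$} (not mod $I$) as $\sum a_J \RP^J + (\text{monomials containing an } E_n)$. One then checks that $s_{2^a,2^a}$ vanishes on every monomial in this basis other than $\RP^{2^a}\times\RP^{2^a}$ — in particular on $\RP^{2^{a+1}}$ and on every $E$-monomial — so that $a_{(2^a,2^a)}=1$. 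Since each $E_n \in I$ (and $I$ is an ideal), subtracting the $E$-monomials from $R_{2^{a+1}} \in I$ yields an honest element $\RP^{2^a,2^a} + \sum_{J \neq (2^a,2^a)} a_J \RP^J$ of $I$, and this is the nontrivial relation you need. After that, your use of the inductive hypothesis, the invariant numbers, and Corollary~\ref{2007-24-aug-lin-ind} goes through correctly.

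A smaller point: your justification that $s_{2^a,2^a}$ vanishes on every $\RP^{J_{2^{a+1}}(4j)}$ — ``no such product has exactly two factors of dimension $2^a$'' — covers only genuine products with at least two factors. It does not address the single-factor case $\RP^{2^{a+1}}$, which is $J_{2^{a+1}}(2^{a+1})$ and could a priori have $s_{2^a,2^a} \ne 0$. One needs the direct computation $s_{2^a,2^a}[\RP^{2^{a+1}}] = \binom{2^{a+1}+1}{2} = 2^a(2^{a+1}+1) \equiv 0 \pmod 2$ to close this case.
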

\begin{proof}
  A generating set for $\MO_*$ as an $\F_2$-algebra is given by the
  manifolds:
  \begin{align*}
    \RP^2, \RP^4, E_5, E_6, \RP^8, E_9, E_{10}, E_{11}, E_{12},
    E_{13}, E_{14}, \RP^{16}, E_{17}, \dots
  \end{align*}
  where the $E_n$'s are the projective bundles provided by
  Proposition~\ref{2007-24-aug-kill-gens}. One can therefore write:
  \begin{align*}
    R_{2^{a+1}} = \sum a_J \RP^J + \text{(terms involving $E_n$'s)}
  \end{align*}
  where $J$ ranges over all partitions of $2^{a+1}$ into powers of
  two.

  Proposition~\ref{2007-24-aug-twisted-rel} established that
  $s_{2^a,2^a}[R_{2^{a+1}}]=1$. This implies that $a_{2^a,2^a}=1$.
  Indeed Thom's formula:
  \begin{align*}
    s_I(w(E \oplus E')) = \sum_{JK=I} s_J(w(E)) \cdot s_K(w(E'))
  \end{align*}
  (see p.~190 of \cite{milnor-stasheff74}) implies that if $m,n\ge1$
  then:
  \begin{align*}
    s_{2^a,2^a}[M^m \times N^n] &= s_{2^a,2^a}[M] + s_{2^a}[M] \cdot
    s_{2^a}[N] + s_{2^a,2^a}[N] \\&=
    \begin{cases}
      1 & \text{if $m=n=2^a$ and $M,N$ are indecomposable in $\MO_*$} \\
      0 & \text{otherwise}
    \end{cases}
  \end{align*}
  Thus $s_{2^a,2^a}[\RP^{2^a,2^a}]=1$, and the only other term on the
  right hand side of the above formula for $R_{2^{a+1}}$ that could
  possibly have $s_{2^a,2^a}$ nonzero is $\RP^{2^{a+1}}$. But direct
  calculation shows that:
  \begin{align*}
    s_{2^a,2^a}[\RP^{2^{a+1}}] &= \tbinom{2^{a+1}+1}{2} = 2^a
    (2^{a+1}+1) = 0
  \end{align*}
  Thus $a_{2^a,2^a}=1$.

  By subtracting the terms involving $E_n$'s from the above formula
  for $R_{2^{a+1}}$, we obtain an element of $I$ of the form:
  \begin{align*}
    \sum a_J \RP^J
  \end{align*}
  with $a_{2^a,2^a}=1$. Suppose the proposition has been proved in
  degrees $8,\dots,2^a$. By subtracting some element of the ideal
  generated by the elements:
  \begin{align*}
    (\RP^{4,4}-\RP^{2,2,2,2}, \dots, \RP^{2^{a-1},2^{a-1}} -
    \RP^{2,\dots,2})
  \end{align*}
  (which by inductive hypothesis lie in $I$) we can obtain an element
  of $I$ of the form:
  \begin{align*}
    \RP^{2^a,2^a} + \sum_J b_J \RP^J
  \end{align*}
  where $J$ now ranges only over partitions of the form
  $J_{2^{a+1}}(4j)$ for $0 \le 4j \le 2^{a+1}$ (including the
  partition $2^{a+1}=2^{a+1}$). Note that in degree~8 no subtraction
  is needed to bring the element into this form.

  Since this element belongs to $I$, its Stiefel-Whitney numbers
  $w_1^{4i}w_{2^{a+1}-4i}$ are zero (by
  Proposition~\ref{2007-24-aug-w1w-flop-inv}). This implies that:
  \begin{align*}
    w_1^{4i}w_{2^{a+1}-4i}[\RP^{2^a,2^a}] = \sum_J b_J \cdot
    w_1^{4i}w_{2^{a+1}-4i}[\RP^J]
  \end{align*}
  for all $0 \le 4i \le 2^{a+1}$.
  Lemma~\ref{2007-24-aug-formula} gives:
  \begin{align*}
    w_1^{4i}w_{2^{a+1}-4i}[\RP^{2^a,2^a}] =
    w_1^{4i}w_{2^{a+1}-4i}[\RP^{2,\dots,2}] =
    \begin{cases}
      1 & \text{if $i=0$} \\
      0 & \text{otherwise}
    \end{cases}
  \end{align*}
  Corollary~\ref{2007-24-aug-lin-ind} showed that the matrix $\Big[
  w_1^{4i}w_{2n-4i}\big[ \RP^{J_{2n}(4j)} \big] \Big]_{0 \le 4i,4j \le
    2n}$ is nonsingular, so $b_{2,\dots,2}=1$ and all other $b_J=0$.
  Thus $\RP^{2^a,2^a}-\RP^{2,\dots,2} \in I$ as desired. The result
  now follows by induction.
\end{proof}

Proposition~\ref{2007-24-nice-rels} implies that any element of
$\MO_{2n}/I$ can be written as a sum of the form $\sum_J a_J \RP^J$
where $J$ ranges over partitions of the form $J_{2n}(4j)$ for $0 \le
4j \le 2n$. Corollary~\ref{2007-24-aug-lin-ind} showed that these
spaces $\RP^J$ are linearly independent in $\MO_*/I$. Thus the ring
$\MO_*/I$ is completely described. Note that there are precisely
$\lfloor 2n/4 \rfloor + 1$ such spaces $\RP^J$ since they correspond
to integers $0 \le 4i \le 2n$.

\section{Real Projective Varieties with Gorenstein Singularities}

Above we determined that the $\F_2$-vector space of Stiefel-Whitney
numbers invariant under classical flops is spanned by the numbers
$w_1^k w_{n-k}$ for $0 \le k \le n-1$. Now we define these numbers for
any real projective normal Gorenstein variety and show that this
definition is compatible with small resolutions whenever they exist.

Throughout this section let $Y$ be an $n$-dimensional projective
normal Gorenstein variety defined over $\R$. Let $Y(\R)$ and $Y(\C)$
denote the set of real and complex points of $Y$ equipped with the
classical topology. Let $i_Y$ denote the inclusion of $Y(\R)$ into
$Y(\C)$ as the fixed-point set of the involution induced by complex
conjugation. For any morphism $f : X \to Y$, let $f_\R : X(\R) \to
Y(\R)$ and $f_\C : X(\C) \to Y(\C)$ denote the corresponding maps.

We will define the numbers $w_1^k w_{n-k}[Y]$ in two stages. First we
will construct a cohomology class $w_1(Y) \in \H^1(Y(\R),\Z/2)$ and
show that $w_1(X(\R))=f_\R^* w_1(Y)$ for any small resolution $f:X \to
Y$. Second we will construct homology classes $w_{n-k}(Y) \in
\H_k(Y(\R),\Z/2)$ for $0 \le k \le n-1$ and show that
$w_{n-k}(Y)=f_{\R*}\big( w_{n-k}(X(\R)) \cap [X(\R)]\big)$ for any
small resolution $f:X \to Y$. Then we can define:
\begin{align*}
  w_1^k w_{n-k}[Y] :=\langle w_1(Y)^k, w_{n-k}(Y) \rangle
\end{align*}
and it will follow that if $f:X \to Y$ is a small resolution then:
\begin{align*}
  w_1^kw_{n-k}[Y]
  &=\langle w_1(Y)^k, w_{n-k}(Y) \rangle \\
  &= \langle w_1(Y)^k, f_{\R*} (w_{n-k}(X(\R)) \cap [X(\R)]) \rangle \\
  &= \langle f_\R^* w_1(Y)^k, w_{n-k}(X(\R)) \cap [X(\R)] \rangle \\
  &= \langle w_1(X(\R))^k, w_{n-k}(X(\R)) \cap [X(\R)] \rangle \\
  &= \langle w_1(X(\R))^k w_{n-k}(X(\R)), [X(\R)] \rangle \\
  &= w_1^k w_{n-k}[X(\R)] \, .
\end{align*}

Note that, as our notation suggests, the classes $w_1(Y)$ and
$w_{n-k}(Y)$ will not be determined solely by the topology of the real
points $Y(\R)$. They will depend on the algebraic structure of $Y$, in
particular how $Y(\R)$ sits within $Y(\C)$.

\smallskip 

\subsection*{The cohomology class \emph{w}$_\text{1}$(\emph{Y})}

The Gorenstein assumption is the key to defining $w_1(Y)=w_1(K_Y) \in
\H^1(Y(\R),\Z/2)$ where $K_Y$ is the line bundle provided by the
following proposition.

\begin{proposition}
  \label{prop:gorenstein}
  If an $n$-dimensional real projective variety $Y$ is normal and
  Gorenstein then any small resolution $f:X \to Y$ is crepant. That
  is, the canonical bundle over the smooth locus of $Y$ extends to a
  line bundle $K_Y$ over $Y$ and $f^* K_Y \iso K_X$ for any small
  resolution $f : X \to Y$.
\end{proposition}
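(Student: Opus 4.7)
The plan is to reduce crepancy to two standard facts: the Gorenstein hypothesis supplies a line bundle $K_Y$ on $Y$ extending the canonical bundle of the smooth locus $Y^{\mathrm{sm}}$, and smallness forces the exceptional locus $\mathrm{Ex}(f) \subset X$ to have codimension at least two, after which a Hartogs-type extension identifies $K_X$ with $f^* K_Y$.

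The first step is essentially a definition: for a normal variety the Gorenstein condition is equivalent to the dualizing sheaf $\omega_Y$ being a line bundle, and I would set $K_Y := \omega_Y$. On $Y^{\mathrm{sm}}$ this restricts canonically to $\det \Omega^1_{Y^{\mathrm{sm}}}$, and normality guarantees that $Y^{\mathrm{sing}}$ has codimension at least two in $Y$.

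The geometric heart of the argument is showing that $\mathrm{Ex}(f) \subset X$ has codimension at least two. Suppose for contradiction that $E \subset X$ is a divisorial component of $\mathrm{Ex}(f)$ and let $r \geq 1$ denote the generic fibre dimension of $f|_E$. Then $\dim f(E) = (n-1)-r$, so $f(E)$ has codimension $r+1$ in $Y$; the smallness inequality then forces $r+1 > 2r$, hence $r < 1$, a contradiction. Consequently $f$ restricts to an open embedding $U := X \setminus \mathrm{Ex}(f) \hookrightarrow Y^{\mathrm{sm}}$ whose complement in $X$ has codimension $\geq 2$.

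Finally I would compare $K_X$ and $f^* K_Y$ via their difference $L := K_X \otimes (f^* K_Y)^{-1}$. On $U$ the canonical identifications $K_X|_U = \det \Omega^1_U = (f^* K_Y)|_U$ furnish a nowhere-vanishing section $s$ of $L|_U$. Because $X$ is smooth and $X \setminus U$ has codimension at least two, $s$ extends uniquely to a nowhere-vanishing global section of $L$, yielding $L \iso \mathcal{O}_X$ and hence $f^* K_Y \iso K_X$. The only real subtlety I anticipate is verifying that the trivialization on $U$ is genuinely canonical (rather than abstract), which is handled by working with the dualizing sheaf $\omega_Y$ throughout and invoking naturality of $\omega_X|_U \iso f^* \omega_Y|_U$ on the open locus where $f$ is an isomorphism.
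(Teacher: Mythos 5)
Your argument follows essentially the same route as the paper's --- Gorenstein plus normality furnish the line bundle $K_Y$ extending the canonical bundle of $Y^{\mathrm{sm}}$, smallness forces the exceptional set to have codimension at least two in $X$, and a codimension-two extension argument on the smooth (hence normal) variety $X$ identifies $K_X$ with $f^*K_Y$. Only the bookkeeping of the codimension estimate differs: you exclude a divisorial exceptional component by contradiction, while the paper stratifies $f^{-1}(Y_1)$ by fibre dimension and sums the estimates.

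One step is asserted rather than proved, and it is precisely where the normality of $Y$ does its second job. You declare at the outset that the generic fibre dimension $r$ of $f|_E$ satisfies $r \ge 1$ for a divisorial component $E \subset \mathrm{Ex}(f)$. This is not automatic from the definition of the exceptional locus: a priori $E$ could map generically finitely onto a divisor in $Y$ and still lie in $\mathrm{Ex}(f)$. Ruling this out requires Zariski's Main Theorem together with normality of $Y$: since $f$ is proper birational and $Y$ is normal, the fibres of $f$ are connected, so a zero-dimensional fibre is a single point and $f$ is finite, hence an isomorphism, near it; consequently $\mathrm{Ex}(f) = f^{-1}(Y_1)$ and every divisorial exceptional component is genuinely contracted. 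The paper makes this explicit (``Since $Y$ is normal, Zariski's Main Theorem says that the fibers of $f$ are connected, so $f$ is in fact an isomorphism away from $Y_1$''); your proof uses it silently. With $r \ge 1$ supplied, the contradiction $r+1 > 2r$ is correct, and the Hartogs-type extension at the end is sound --- in particular the extended section of $L$ cannot vanish, since its zero locus would be a divisor contained in the codimension-$\ge 2$ set $X \setminus U$.
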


\begin{proof}
  Since $Y$ is Gorenstein, its dualizing sheaf $\omega_Y$ is a line
  bundle. Since $Y$ is normal and projective, $\omega_Y$ is isomorphic
  to the canonical sheaf $O_Y(K_Y) = O_Y(i_* K_{Y_{sm}})$ where
  $i:Y_{sm} \into Y$ is the inclusion of the smooth locus (see
  \cite[Proposition~5.75]{kollar-mori98}). Thus the canonical sheaf
  $O_Y(K_Y)$ is in fact a line bundle; denote it by $K_Y$. It
  restricts to $K_{Y_{sm}}$.

  Now let $f:X \to Y$ be a small resolution and let:
  \begin{align*}
    Y_r = \{ y \in Y : \dim f^{-1}(y) \ge r \} \, .
  \end{align*}
  Since $f$ is small, $\codimp{Y_r}>2r$ for all $r>0$. In particular
  $f$ has 0-dimensional fibers away from the subspace $Y_1$. Since
  $Y$ is normal, Zariski's Main Theorem says that the fibers of $f$
  are connected, so $f$ is in fact an isomorphism away from $Y_1$.
  That is, there is a commutative diagram:
  \begin{align*}
    \xymatrix{
      X-f^{-1}(Y_1) \ar@{^{(}->}[r] \ar[d]_\iso
      & X \ar[d]^f \\
      Y-Y_1 \ar@{^{(}->}[r] & Y }
  \end{align*}

  All that remains is to show that $f^*K_Y \iso K_X$. Since $Y-Y_1
  \subset Y_{sm}$, the above diagram implies that $f^* K_Y$ and $K_X$
  both restrict to the canonical bundle over $X-f^{-1}(Y_1)$. Since
  $X$ is smooth, $X$ is normal. It is a standard result that two line
  bundles on a normal variety over a field are isomorphic if they are
  isomorphic away from a subset of codimension $\ge2$. Thus to show
  that $f^* K_Y \iso K_X$ it suffices to show that $\codim
  f^{-1}(Y_1)\ge2$.
  Write:
  \begin{align*}
    f^{-1}(Y_1) = f^{-1}(Y_1-Y_2) \cup f^{-1}(Y_2-Y_3) \cup \cdots
    \cup f^{-1}(Y_R-Y_{R+1})
  \end{align*}
  Since $\dimp{X}=\dimp{Y}$ and $f$ is small:
  \begin{align*}
    \codim f^{-1}(Y_r-Y_{r+1}) \ge \codimp{Y_r-Y_{r+1}}-r > 2r-r=r
  \end{align*}
  for any $r>0$. Thus $\codim f^{-1}(Y_1) = \text{min}_{r>0} \; \{
  \codim f^{-1} (Y_r-Y_{r+1}) \} \ge 2$ as desired.
\end{proof}


\subsection*{The homology classes \emph{w$_\text{n--k}$}(\emph{Y})}

To define the homology classes $w_{n-k}(Y)$ for $0 \le k \le n-1$, we
will define an Euler function $\alpha_Y$ and set:
\begin{align*}
  w_{n-k}(Y) := w_{n-k}(\alpha_Y) \in \H_k(Y(\R),\Z/2)
\end{align*}
where $w_* : F_{\Z/2} \to \H_*(-,\Z/2)$ is the natural transformation
described in Section~\ref{section:sw-classes}.

Namely let $\alpha_Y$ be the $\Z/2$-valued function which assigns to
each point of $Y(\R)$ its local mod~2 intersection homology Euler
characteristic \emph{within the complexification $Y(\C)$}. More
formally let:
\begin{align*}
  \alpha_Y = \chi(\mbf{IC}^\bullet_{Y(\C),\Z/2}) \circ i_Y
\end{align*}
where $\mbf{IC}^\bullet_{Y(\C),\Z/2}$ denotes the, say upper-middle,
mod~2 intersection chain sheaf on $Y(\C)$ (see \cite{ih2}), and $\chi$
denotes the function which assigns to a sheaf $\sfcx{A}$ the
$\Z/2$-valued function:
\begin{align*}
  p \mapsto \sum_i \mathrm{rank}\; \mathbf{H}^i(\sfcx{A})_p \quad \text{mod 2}
\end{align*}

To see that this $\Z/2$-valued function $\alpha_Y$ is constructible
and satisfies the local Euler condition, consider the complex
algebraic variety $Y(\C)$. It has a complex algebraic stratification
(see for instance \cite[p.~43]{smt88}) and the real points of its
strata form real algebraic subvarieties $\{W_i\}$ of $Y(\R)$ (these
may not form a stratification of $Y(\R)$ but that is
irrelevant). Since the intersection chain sheaf
$\mbf{IC}^\bullet_{Y(\C),\Z/2}$ is constructible with respect to the
stratification of $Y(\C)$, the function $\alpha_Y$ is a $\Z/2$-linear
combination of the characteristic functions $\{1_{W_i}\}$. As
explained in Section~\ref{section:sw-classes}, the fact that real
analytic spaces are mod~2 Euler spaces implies that these
characteristic functions, and hence $\alpha_Y$ satisfy the local Euler
condition, as desired.

\begin{proposition}
  If $f : X \to Y$ is a small resolution then:
  \begin{align*}
    f_{\R*}(w_*(X(\R)) \cap [X(\R)]) = w_*(\alpha_Y) \in
    \H_*(Y(\R),\Z/2)
  \end{align*}
\end{proposition}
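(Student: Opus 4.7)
The plan is to identify $(f_\R)_*(1_{X(\R)})$ with the Euler function $\alpha_Y$, after which the proposition follows at once from naturality of the Fu-McCrory transformation $w_*$. Since $X(\R)$ is smooth, $w_*(1_{X(\R)}) = w(X(\R)) \cap [X(\R)]$; naturality of $w_*$ for the proper map $f_\R : X(\R) \to Y(\R)$ therefore gives
\[
f_{\R*}\bigl(w(X(\R)) \cap [X(\R)]\bigr) \;=\; f_{\R*} w_*(1_{X(\R)}) \;=\; w_*\bigl((f_\R)_*(1_{X(\R)})\bigr).
\]
So the proposition reduces to the Euler-function identity $(f_\R)_*(1_{X(\R)}) = \alpha_Y$ in $E(Y(\R))$. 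Evaluating at a point $p \in Y(\R)$, the left side equals $\chi(f_\R^{-1}(p))$ mod~2 by the Fu-McCrory pushforward formula, while by definition $\alpha_Y(p) = \chi(\mbf{IC}^\bullet_{Y(\C),\Z/2})_p$ mod~2. The remaining task is therefore the congruence $\chi(f_\R^{-1}(p)) \equiv \chi(\mbf{IC}^\bullet_{Y(\C),\Z/2})_p \pmod 2$.

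I would bridge this congruence through the complex fiber $f_\C^{-1}(p)$ in two standard steps. First, because smallness is an algebraic codimension condition that survives complexification, $f_\C : X(\C) \to Y(\C)$ is a small resolution of complex varieties, and the Goresky-MacPherson characterization of intersection cohomology identifies $R(f_\C)_* \underline{\Z/2}_{X(\C)}$ with $\mbf{IC}^\bullet_{Y(\C),\Z/2}$ up to a degree shift; taking mod~2 Euler characteristics of stalks at $p$ yields $\chi(\mbf{IC}^\bullet_{Y(\C),\Z/2})_p \equiv \chi(f_\C^{-1}(p);\Z/2) \pmod 2$. Second, complex conjugation restricts to an involution on $f_\C^{-1}(p)$ with fixed-point set exactly $f_\R^{-1}(p)$; the complement $f_\C^{-1}(p) \setminus f_\R^{-1}(p)$ carries a free $\Z/2$-action, hence has even Euler characteristic, so Smith-Floyd gives $\chi(f_\C^{-1}(p)) \equiv \chi(f_\R^{-1}(p)) \pmod 2$. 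Chaining the two congruences completes the proof.

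The main obstacle I anticipate is the sheaf-theoretic first step: one must confirm that the real small-resolution hypothesis supplies a small morphism of complex varieties (it does, because the codimension inequalities defining smallness are algebraic and hold over $\C$ whenever they hold for a real form) and that the Goresky-MacPherson characterization of $\mbf{IC}^\bullet$ works with $\Z/2$ coefficients (it does, as smallness is a purely topological condition independent of coefficient ring). A minor subsidiary point is verifying that $Y(\R)$ admits Fu-McCrory's subanalytic framework, but this is automatic since real algebraic sets are subanalytic.
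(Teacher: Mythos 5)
Your proposal is correct and follows essentially the same route as the paper: reduce to the Euler-function identity $(f_\R)_*(1_{X(\R)}) = \alpha_Y$ via naturality of $w_*$, identify $\sfcx{IC}_{Y(\C),\Z/2}$ with $Rf_{\C*}(\Z/2)_{X(\C)}$ using Goresky-MacPherson, and pass from the complex to the real fiber via the free conjugation action on $f_\C^{-1}(p)\setminus f_\R^{-1}(p)$. The only cosmetic difference is that the paper derives $\chi(f_\C^{-1}(p)) \equiv \chi(f_\R^{-1}(p)) \pmod 2$ from additivity in the Borel-Moore long exact sequence rather than citing Smith--Floyd, but the underlying observation (multiplicativity of $\chi$ for the free double cover on the complement) is identical.
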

\begin{proof}
  By definition:
  \begin{align*}
    \alpha_Y = \chi( \sfcx{H}(\sfcx{IC}_{Y(\C),\Z/2}) ) \circ i_Y &=
    \chi \Big( \lim_{U \ni p} \H^\bullet(U, \sfcx{IC}_{Y(\C),\Z/2})
    \Big) \circ i_Y \\
    &\stackrel{(1)}{=} \chi \Big( \lim_{U \ni p} \H^\bullet(U, Rf_{\C*}
    (\Z/2)_{X(\C)})
    \Big) \circ i_Y \\
    &= \chi \Big( \lim_{U \ni p} \H^\bullet(f_\C^{-1}(U),
    (\Z/2)_{X(\C)})
    \Big) \circ i_Y \\
    &= \chi \Big( \H^\bullet(f_\C^{-1}(p), \Z/2) \Big) \circ i_Y \\
    &\stackrel{(2)}{=} \chi \Big( \H^\bullet(f_\R^{-1}(p), \Z/2) \Big) \circ i_Y \\
    &= f_{\R*}(1_{X})
  \end{align*}

  Equality~(1) holds since Goresky-MacPherson showed that $Rf_{\C*}
  (\Z/2)_{X(\C)} \iso \sfcx{IC}_{Y(\C),\Z/2}$ (see
  \cite[p.~121]{ih2}). Equality~(2) holds since the Borel-Moore
  homology long exact sequence (see \cite[p.~371]{fulton98}):
  \begin{align*}
    \cdots \to \H_j \big( f_\R^{-1}(p) \big) \to \H_j \big( f_\C^{-1}(p)
    \big) \to \H_j \big( f_\C^{-1}(p)-f_\R^{-1}(p) \big) \to \cdots
  \end{align*}
  implies that:
  \begin{align*}
    \chi(f^{-1}_\C(p))-\chi(f^{-1}_\R(p))=\chi\big(f^{-1}_\C(p)-f^{-1}_\R(p)\big)
  \end{align*}
  which is even since complex conjugation induces a free involution of
  $f_\C^{-1}(p) - f_\R^{-1}(p)$ (and the Euler characteristic is
  multiplicative for fiber bundles, in particular for double covers).

  The result then follows from the pushforward formula $f_{\R*} w_* =
  w_* f_{\R*}$.
\end{proof}

\smallskip

The definition of $\alpha_Y$ might seem unnecessarily complicated. Why
not simply use $1_Y$ instead? The 3-fold node $Y$ illustrates why
not. By the preceding proposition:
\begin{align*}
  w_3[X_1]=w_3[X_2] &= w_3(\alpha_Y)
  \intertext{where $X_1 \to Y \from X_2$ are its small resolutions
    discussed earlier. These resolutions are isomorphisms away from
    the singular point $i:P\to Y$, where the fiber $\P^1$ has mod 2
    Euler characteristic $\chi(\P^1(\R))=0=2=\chi(\P^1(\C))$. Thus:}
  &= w_3(1_Y-1_P) = w_3(1_Y) - w_3(1_P) 
  \intertext{Since $w_3(1_P)$ lives in $\H_0(Y(\R),\Z/2)$ and $w_*(1_P)
    \cap [P] = 1 \in \H_*(P(\R),\Z/2)$, we can use the pushforward
    formula to compute:}
  &= w_3(1_Y) - w_3(i_* 1_P) = w_3(1_Y) - i_* w_0(1_P) \\
  & = w_3(1_Y) - 1
\end{align*}
Thus if one used $1_Y$ instead of $\alpha_Y$ then the resulting
definition of $w_3[Y]$ would not equal $w_3[X_i]$ and would thus be
incompatible with small resolutions.

\section*{Acknowledgments}

Thanks to Burt Totaro for inspiration and many helpful
conversations. Thanks to the referee for a careful reading and several
helpful comments. Thanks also to the National Science Foundation, the
Cambridge Overseas Trusts, the Cambridge Philosophical Society and the
Cambridge Lundgren Fund for their generous financial support.


\end{document}